\documentclass[a4paper,12pt]{amsart}
\usepackage[ngerman,english]{babel}
\usepackage[ansinew]{inputenc}
\usepackage{amssymb}
\usepackage{amsmath}
\usepackage{amsfonts}
\usepackage{amstext}
\usepackage{amsthm}
\usepackage{graphics}
\usepackage{graphicx}
\usepackage[T1]{fontenc}
\usepackage{textcomp}

\usepackage[font=footnotesize]{caption}

\usepackage{enumerate}
\usepackage{hyperref}

\newcommand{\Hmm}[1]{\leavevmode{\marginpar{\tiny%
			$\hbox to 0mm{\hspace*{-0.5mm}$\leftarrow$\hss}%
			\vcenter{\vrule depth 0.1mm height 0.1mm width \the\marginparwidth}%
			\hbox to
			0mm{\hss$\rightarrow$\hspace*{-0.5mm}}$\\\relax\raggedright #1}}}

\newcommand{\IN}{{\mathbb{N}}}
\newcommand{\IR}{{\mathbb{R}}}

\newcommand{\R}{{\mathbb{R}}}

\newcommand{\1}{\mathbbmss{1}}

\renewcommand{\phi}{\varphi}
\newcommand{\ph}{\varphi}
\renewcommand{\epsilon}{\varepsilon}
\newcommand{\al}{\alpha}

\renewcommand{\ge}{>}

\newcommand{\Qgen}{{\widetilde{Q}}}
\newcommand{\Dgen}{{\widetilde{D}}}

\newcommand{\ltwo}{{\ell^2(X,m)}}
\newcommand{\lp}{{\ell^p(X,m)}}

\newcommand{\NeuDom}{{D(Q)}}
\newcommand{\NeuDir}{{Q}}

\newcommand{\NeuLap}{{L}}

\newtheorem{theorem}{Theorem}[section]
\newtheorem{lemma}[theorem]{Lemma}
\newtheorem{proposition}[theorem]{Proposition}

\theoremstyle{definition}
\newtheorem{remarks}[theorem]{Remark}
\newtheorem{definition}[theorem]{Definition}





\begin{document}
\title[The Kazdan-Warner equation on graphs]
{The Kazdan-Warner equation on  canonically compactifiable  graphs}
\author[M.~Keller]{Matthias Keller}
\address{M.~Keller,   Institut f\"ur Mathematik, Universit\"at Potsdam
	\\14476  Potsdam, Germany}
\email{{matthias.keller@uni-potsdam.de}}
\author[M.~Schwarz]{Michael Schwarz}
\address{M.~Schwarz,  Institut f\"ur Mathematik, Universit\"at Potsdam
	\\14476  Potsdam, Germany}
\email{mschwarz@math.uni-potsdam.de}


\begin{abstract}
	We study the Kazdan-Warner equation on canonically compactifiable  graphs. These graphs are distinguished as analytic properties of Laplacians on these graphs carry a strong resemblance to Laplacians on open pre-compact manifolds. 
\end{abstract}

\maketitle
\section{Introduction}
In recent years various topics in the analysis on graphs received a lot of attention. While the main focus was put so far mainly on the study of linear equations such as the heat equation, the Poisson equation or fundamental topics in spectral theory, see e.g. \cite{BHLLMY,BHY, BKW,DLMSY,Fol3,Gol,HJL,HuaLin,HuK,KL1,KePiPo2,Mue14,Woj1,Web}, there is an upcoming interest in non-linear problems. On the one  hand this concerns non-linear operators such as the $ p $-Laplacian, \cite{HuMu15,KeMu16,Mug13}, and on the other hand non-linear equations such as the Kazdan-Warner equation \cite{GLY16,Ge17}.

A major goal is often to unveil the analogies in the continuum, i.e., Riemannian manifolds, and the discrete setting, i.e. graphs. The deeper reason of these analogies stems from the fact that both the Laplacian in the continuum and in the discrete are generators of so called Dirichlet forms or from another perspective they are generators of certain Markov processes, see \cite{Fuk}. A major challenge in the discrete is often to find the correct geometric notions to even formulate analogous statements.

The Kazdan-Warner equation arises from the basic geometric question which functions are potential curvatures of $ 2 $-dimensional manifolds. Kazdan and Warner studied this question  on compact manifolds \cite{KaWa74a} and on open manifolds that are diffeomorphic to an open set in a compact  manifold \cite{KaWa74b}. From there on there is an enormous amount of work on this topic and we refer here only to \cite{ChLi93,DJLW97}

Compactness in the discrete setting is obviously equivalent to finiteness of the graph. For this case a very satisfying answer was found  by Grigor'yan/Lin/Yang \cite{GLY16}, see also \cite{Ge17}. On the other hand it is not a priori clear what are graph analogues to  pre-compact open manifolds. This is the starting point of our paper. We propose a class called canonically compactifiable graphs. These infinite graphs were recently introduced in \cite{GHKLW}, see also \cite{KLSW17}, and the analysis of Laplacians on  these graphs resembles a lot of features from the analysis of Laplacians on bounded domains or pre-compact open manifolds with sufficiently nice boundaries. For example the Dirichlet problem on the Royden boundary is uniquely solvable and the spectrum of the Laplacian is purely discrete in the case of finite measure. Here, for the Kazdan-Warner equation, we prove analogous results as in the finite case for canonically compactifiable graphs, and, indeed, our results include finite graphs as a special case.

Let us recall the original geometric motivation of the Kazdan-Warner equation.  This traces back to an equation on a $ 2 $-dimensional Riemannian manifold $ M $ with finite volume and two conformal metrics $ g  $ and  $ \widetilde g=e^{2\ph}g $ for some smooth function $ \ph $. Specifically denoting the corresponding Gaussian curvatures by $ K $ and $ \widetilde K $ this gives rise to the equation
\begin{align*}
\Delta_{g}\ph=K-\widetilde K e^{2\ph},
\end{align*}
with the Laplace-Beltrami operator $ \Delta_{g} $ with respect to the metric $ g $. Provided there is a solution $ \psi $ to the equation $ \Delta_{g} \psi=K-\overline K$ with $ \overline K $ being the averaged curvature and letting $ u=2(\ph-\psi) $ this translates into the equation
\begin{align*}
\Delta_{g} u=2\overline K-(2\widetilde Ke^{2\psi})e^{u}.
\end{align*}
Hence, the question whether a prescribed function $ \widetilde  K$ is a curvature function under a conformal transformation is equivalent to the question whether the equation above has a solution $ u $. Of course, from the geometric picture 	 one already expects certain restrictions on $ \widetilde{K} $ depending on $ \overline K $ as for example that  $ \widetilde{K} $ is positive somewhere when  $ \overline K>0 $ or that $ \widetilde{K} $ changes sign if $ \widetilde{K} \neq 0$ in the flat case $ \overline K =0$ (at least when $ M $ is compact).

Although there are already several very promising notions of curvatures proposed on graphs, see e.g. \cite{BHLLMY,BP1,BP2,EM2,Fo03,Mue14,KPP,Oll1,Oll2,Wi1}, it still seems to be out of reach to study this problem in terms of one of these curvature notions. However, looking at this equation from an analytic perspective  is interesting in its own right. This reduces to the study of an equation
\begin{align*}
Lu=-c+he^{u}
\end{align*}
where $ L $ is the graph Laplacian, $ c $ is a constant, $ h $ is a function, and $ u $ is a function in the domain of $ L $, see Section~\ref{section_graphs} for the precise definitions.

As stated above we study this equation on so called canonically compactifiable graphs and under the assumption that the graph has finite measure. One way of characterizing canonically compactifiable graphs is that functions of finite energy are already bounded, see \cite{GHKLW} (while for this work it is sufficent that all finite energy functions in $\ell^2$ are bounded).  

Our results can be  summarized as follows in dependence of the sign of $ c $ and where $ \overline h $ denotes the average of $ h $:
\begin{itemize}
	\item[$ c=0 :$] There exists a solution if and only if  $ \overline h<0 $ and $ h  $ changes sign.
	\item[$ c>0 :$] There exists a solution if and only if $ h  $ is positive somewhere.
	\item[$ c<0 :$] If there exists a solution then $ \overline h<0 $ and if $ \overline h<0 $ then there is $-\infty< c(h)<0 $ such that there is a solution for $ c(h)<c<0 $.
\end{itemize}
We only assume $ h \neq0$ to be square summable. Moreover,  we consider the so-called Neumann-Laplacian which is the self-adjoint operator associated to the quadratic form on all $ \ell^{2} $-functions of finite energy. Solutions are then requested to be in the  operator domain of the Neumann-Laplacian. So, we are indeed talking about strong solutions of the equation.  From this it is clear that while the basic ideas of our proofs owe to \cite{GLY16}, they  need substantially more care on the operator theoretic level. Furthermore, in contrast to the case of manifolds we do not have a theory of elliptic regularity in the discrete case and no chain rule.

The paper is structured as follows. In the next section we introduce the setting and in particular canonically compactifiable graphs. We furthermore provide basic tools such as a Sobolev embedding theorem, a Poincaré inequality and a Trudinger Moser inequality. In Section~\ref{section_c=0} we characterize existence of solutions in the case $ c=0 $ and in Section~\ref{section_c>0} we study the case $c>0$. Finally, in Section~\ref{section_c<0} we give a necessary and sufficient criterion on the existence of solutions in the case $c<0$.

In this paper $ C $ denotes an arbitrary positive constant which might change from line to line. If we want to indicate that a constant $ C $ depends on a parameter $ \beta $ we write $ C=C(\beta) $.

\emph{Note added:} After this work was completed we learned about the recent preprint of Ge/Jiang \cite{GeJi17}. This paper studies the Kazdan-Warner equation on infinite graphs, however with different geometric assumptions on the graph and on the function $ h $. It seems that these assumptions are independent from the assumptions of this paper and the techniques   used there are totally different from ours.

\section{The Laplacian on canonically compactifiable graphs}\label{section_graphs}

In this section we introduce canonically compactifiable graphs over discrete measure spaces. These graphs, which can be seen as discrete analogues to open pre-compact manifolds,  were studied systematically in \cite{GHKLW}, see also \cite{KLSW15}. The definition is here slightly weakened by taking also a finite measure into consideration. We then introduce several tools that are needed for the proof. The availability of these tools such as the Sobolev embedding theorem, a Poincaré inequality and a Trudinger Moser inequality, which hold trivially in the finite setting, see \cite{GLY16},  owes here to the assumption on the graph being canonically compactifiable and of finite measure.

Let $X$ be a countable\footnote{Here, \emph{countable} means finite or countably infinite, although we are of course mainly interested in the countably infinite case} set. A symmetric function $b:X\times X\to [0,\infty)$ that vanishes on the diagonal and satisfies 
\begin{align*}
\sum_{y\in X} b(x,y)<\infty
\end{align*}
for every $x\in X$ is called a \emph{graph} on $X$.

A graph is called \emph{connected} if for every $x,y\in X$ there are $x_1,\ldots, x_n\in X$ such that ${b(x,x_1)>0, b(x_1,x_2)>0,\ldots, b(x_n,y)>0}$.
We equip the set $X$ with the discrete topology, so that every measure $m$ on (the Borel-$\sigma$-field of)  $X$ is given by a  function, also denoted by $m$, from $X$ to $[0,\infty]$, via
$m(A):=\sum_{x\in A} m(x)$. Throughout this paper we assume that  $m(x)>0$ for all $ x\in X $ and that $ m $ is a finite measure, 
\begin{align*}
m(X)<\infty,
\end{align*}
and call   $(X,m)$ a \emph{finite discrete measure space}.
Given a measure $m$ on $X$ we define $\lp$ as the space of real valued functions on $X$ which are $ p $-square-summable with respect to $m$, $ 1\leq p<\infty $ and $ \ell^{\infty}(X)  $ the space of bounded functions. Denote the corresponding norm $ \|\cdot\|_p $ by 
$$ \|f\|_p:=
\left(\sum_{x\in X}|f(x)|^{p}m(x)\right)^{1/p} $$ Furthermore, we denote the dual pairing of $\ell^p$ and $\ell^q$ for $ p,q\in[1,\infty] $ with $ 1/p +1/q=1$ by  $\langle \cdot,\cdot\rangle$.

Denote by $C(X)$ the space of real valued functions. We define a quadratic form $\Qgen:C(X)\to [0,\infty]$ via
\[\Qgen(u)=\frac12\sum_{x,y\in X} b(x,y)(u(x)-u(y))^2.\]
Let $$ \Dgen:=\{u\in C(X)\mid \Qgen(u)<\infty\}. $$ 

Via polarization $\Qgen$ gives rise to a semi-inner product on $ \Dgen $ also denoted by $\Qgen$. It is easy to see that $ \Qgen $ is \emph{Markovian}, i.e.
\[\Qgen(0\vee u\wedge 1)\leq\Qgen(u), \qquad u\in\Dgen .\] 
By Fatou's lemma $ \widetilde{Q} $ is lower semi-continuous and, therefore,
 the restriction $\NeuDir$ of $\Qgen$ to $$ \NeuDom:=\Dgen\cap\ltwo $$ 
is closed and, hence, a Dirichlet form which is often referred to as the \emph{Dirichlet form with Neumann boundary conditions} or the \emph{Neumann form}. We denote the \emph{form norm} by
\begin{align*}
\|\cdot\|_{\NeuDir}:=\left(\NeuDir(\cdot)+\|\cdot\|_{2}^{2}\right)^{1/2}.
\end{align*}
Furthermore, we let $ \NeuDir(f):=\infty $ for $ f\in \ell^{2}(X,m)\setminus D(\NeuDir) $.

There is a self-adjoint operator, the \emph{Neumann Laplacian}, $L$ in correspondence with $\NeuDir$ given by 
\begin{align*}D(\NeuLap)=\{u\in\NeuDom\mid& \text{there is } f\in\ltwo \text{ such that }\\&\NeuDir(u,v)=\langle f,v\rangle \text{ holds for every } v\in\NeuDom\},\end{align*}
\[\NeuLap u=f.\]
From this definition it is clear that the constant functions are in $ D(L) $ in the case of finite measure.
It turns out, \cite{HKLW},  by the virtue of Green's formula $ \NeuLap $ acts as
\[\NeuLap u(x)=\frac{1}{m(x)}\sum_{y\in X} b(x,y)(u(x)-u(y)),\qquad  u\in D(\NeuLap).\]

The goal of this paper is to study the so called \emph{Kazdan-Warner equation} for functions $ u\in D(\NeuLap) $
\[\NeuLap u=-c+he^u,\] 
for some given function $ h\in\ell^{2}(X,m) $, $ h\neq 0 $, and a constant $ c $.
We will study this equation on what we call \emph{canonically compactifiable graphs}
over $ (X,m) $.

\begin{definition}
 A connected graph $b$ over a finite discrete measure space $(X,m)$ is called \emph{canonically compactifiable} if $$ \NeuDom\subseteq \ell^\infty(X). $$ 
\end{definition}

\begin{remarks}
	The notion of canonically compactifiable graphs $ b $ over a discrete set $ X $ (instead of a  measure space $ (X,m) $) was introduced in \cite{GHKLW} via the condition
	\begin{align*}
	\Dgen\subseteq \ell^{\infty}(X).
	\end{align*}
	For these graphs the Royden compactification (which arises as space of maximal ideals of the commutiative Banach-algebra of $ \Dgen $ closed with respect to $ \|\cdot\|_{\infty} $) turns out to have particular nice properties. For example the Dirichlet problem with respect to the Royden boundary is uniquely solvable. Furthermore,  various results in  \cite{GHKLW} suggest that canonically compactifiable stand in parallel to bounded domains in $ \R^{d} $. 
\end{remarks}

Next, we present two embedding results.

\begin{proposition}\label{lemma_embedding_Neumann_supnorm}
	Let $b$ be a connected, canonically compactifiable graph over $(X,m)$ with $m(X)<\infty$. Then, there is a $C>0$ such that \[\|u\|_\infty\leq C\|u\|_\NeuDir,\qquad u\in \NeuDom.\]
\end{proposition}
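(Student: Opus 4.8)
The plan is to deduce the quantitative estimate from the purely qualitative inclusion $\NeuDom\subseteq\ell^\infty(X)$ by an application of the closed graph theorem. Consider the inclusion map $\iota\colon(\NeuDom,\|\cdot\|_{\NeuDir})\to(\ell^\infty(X),\|\cdot\|_\infty)$, $\iota(u)=u$. This map is well-defined precisely because $b$ is canonically compactifiable, and it is obviously linear. Both spaces are Banach spaces: $\ell^\infty(X)$ is complete for the sup norm, and $(\NeuDom,\|\cdot\|_{\NeuDir})$ is complete because $\NeuDir$ is a closed form, as recorded above. Hence it suffices to verify that the graph of $\iota$ is closed; the closed graph theorem then yields the boundedness of $\iota$, and the operator norm provides the constant $C$.

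To check closedness of the graph, suppose $u_n\to u$ in the form norm and $u_n\to v$ in $\ell^\infty(X)$; I must show that $u=v$. This is where finiteness of the measure enters. For any $f\in\ell^\infty(X)$ one has $\|f\|_2^2=\sum_{x\in X}|f(x)|^2 m(x)\leq m(X)\|f\|_\infty^2$, so $\ell^\infty(X)$ embeds continuously into $\ltwo$ and consequently $u_n\to v$ in $\ltwo$. On the other hand, convergence in the form norm $\|\cdot\|_{\NeuDir}=(\NeuDir(\cdot)+\|\cdot\|_2^2)^{1/2}$ dominates convergence in $\ltwo$, so $u_n\to u$ in $\ltwo$ as well. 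By uniqueness of $\ltwo$-limits, $u=v$ as elements of $\ltwo$, and since $m(x)>0$ for every $x\in X$ this equality is pointwise, i.e. $u(x)=v(x)$ for all $x\in X$. Thus $u=v$ and the graph is closed.

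Applying the closed graph theorem gives a constant $C>0$ with $\|u\|_\infty=\|\iota(u)\|_\infty\leq C\|u\|_{\NeuDir}$ for all $u\in\NeuDom$, which is exactly the assertion.

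I do not expect a genuine difficulty here: the argument is entirely functional-analytic once canonical compactifiability supplies the set-theoretic inclusion. The only two points that need to be invoked with care are the completeness of $\NeuDom$ under the form norm (which is nothing but the closedness of $\NeuDir$) and the finite-measure embedding $\ell^\infty(X)\hookrightarrow\ltwo$ used to identify the two candidate limits. The hypothesis $m(X)<\infty$ is essential precisely at this last step, and without it the closed graph argument would break down.
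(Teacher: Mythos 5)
Your proof is correct and is exactly the paper's argument: the paper's entire proof reads ``The statement follows from the closed graph theorem,'' and you have merely supplied the routine details (completeness of $(\NeuDom,\|\cdot\|_\NeuDir)$ from closedness of $\NeuDir$, completeness of $\ell^\infty(X)$, and closedness of the graph of the inclusion). The only quibble is your closing remark that $m(X)<\infty$ is essential for the closed-graph step: since $m(x)>0$ for all $x$, both sup-norm and $\ell^2$-convergence already imply pointwise convergence, so the two limits can be identified without the embedding $\ell^\infty(X)\hookrightarrow\ltwo$; this does not affect the validity of your proof.
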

\begin{proof}
	The statement follows from the closed graph theorem.	
\end{proof}

\begin{proposition}[Sobolev Embedding, Theorem~5.1,~Corollary~5.2 \cite{GHKLW}]\label{lemma_Sobolev}
 Let $b$ be a connected, canonically compactifiable graph over $(X,m)$ with $ m(X)<\infty $. Then,
  $(\NeuDom,\|\cdot\|_\NeuDir)$ embeds compactly into $\ltwo$. In particular, the operator $\NeuLap$ has purely discrete spectrum.
\end{proposition}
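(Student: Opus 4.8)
The plan is to show directly that bounded subsets of $(\NeuDom,\|\cdot\|_\NeuDir)$ are relatively compact in $\ltwo$, and then to deduce discreteness of the spectrum from compactness of the resolvent. First I would take an arbitrary sequence $(u_n)$ in $\NeuDom$ with $\|u_n\|_\NeuDir\le M$ for all $n$. By Proposition~\ref{lemma_embedding_Neumann_supnorm} this sequence is uniformly bounded in the supremum norm, $\|u_n\|_\infty\le CM$. Since $X$ is countable, a Bolzano--Weierstrass argument together with a diagonal extraction yields a subsequence $(u_{n_k})$ converging pointwise on all of $X$ to some function $u$, which again satisfies $\|u\|_\infty\le CM$.

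The next step is to upgrade this pointwise convergence to convergence in $\ltwo$. Here the finiteness of the measure enters decisively: the constant function equal to $2CM$ is square summable because $\sum_{x\in X}(2CM)^2 m(x)=(2CM)^2 m(X)<\infty$, and it dominates $|u_{n_k}-u|$ pointwise. Hence dominated convergence gives $\|u_{n_k}-u\|_2^2=\sum_{x\in X}|u_{n_k}(x)-u(x)|^2 m(x)\to 0$, so that $(u_{n_k})$ converges in $\ltwo$. This establishes the asserted compact embedding. I would moreover note that lower semicontinuity of $\Qgen$ under pointwise convergence (Fatou's lemma, as already observed) gives $\Qgen(u)\le\liminf_k\Qgen(u_{n_k})<\infty$, so that the limit in fact lies in $\NeuDom$.

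For the spectral statement I would argue that compactness of the form-domain embedding is equivalent to $\NeuLap$ having compact resolvent. Concretely, $(\NeuLap+1)^{-1}$ maps $\ltwo$ boundedly into $(\NeuDom,\|\cdot\|_\NeuDir)$, and composing with the compact inclusion back into $\ltwo$ exhibits it as a compact operator on $\ltwo$; a self-adjoint operator with compact resolvent has purely discrete spectrum, which is the claimed conclusion.

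I expect the main obstacle to be the passage from pointwise to norm convergence, which is exactly the place where both hypotheses are needed: the supremum-norm embedding of Proposition~\ref{lemma_embedding_Neumann_supnorm} supplies a uniform pointwise bound, and the assumption $m(X)<\infty$ makes the dominating constant square summable. Without either ingredient the argument breaks down, so the heart of the proof is to verify that these two facts combine cleanly through dominated convergence.
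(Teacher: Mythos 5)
Your proof is correct, but it takes a genuinely different route from the paper. You argue by direct sequential compactness: the uniform bound $\|u_n\|_\infty\leq C\|u_n\|_\NeuDir$ from Proposition~\ref{lemma_embedding_Neumann_supnorm}, a diagonal extraction over the countable set $X$ to get a pointwise limit, and then dominated convergence with the square-summable constant dominator $2CM$ (this is exactly where $m(X)<\infty$ enters) to upgrade to $\ltwo$-convergence; your closing remarks --- Fatou's lemma placing the limit back in $\NeuDom$, and the factorization of $(\NeuLap+1)^{-1}$ through the compact form-domain embedding to get a compact resolvent and hence discrete spectrum --- are also sound. The paper instead works on the operator level: it uses the smoothing chain $e^{-t\NeuLap}\ltwo\subseteq D(\NeuLap)\subseteq\NeuDom\subseteq\ell^\infty(X)\subseteq\ltwo$ to factor the semigroup through $\ell^\infty(X)$, invokes the factorization principle of \cite{Sto94} to conclude that $e^{-t\NeuLap}$ is Hilbert--Schmidt, and then uses the standard equivalence between compactness of the semigroup, compactness of the form-domain embedding, and discreteness of the spectrum. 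The trade-off: your argument is more elementary and self-contained, needing nothing beyond the closed graph theorem already used in Proposition~\ref{lemma_embedding_Neumann_supnorm} and dominated convergence, whereas the paper's factorization argument delivers strictly more information essentially for free --- not merely compactness but the Hilbert--Schmidt property of $e^{-t\NeuLap}$ (hence, by the semigroup law, a trace-class semigroup and summability information on the eigenvalues), which a bare sequential-compactness argument does not provide.
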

\begin{proof}
	For the semigroup  $e^{-t\NeuLap}  $, $ t\geq0 $, of $ \NeuLap $  we have by the definition of canonically compactifiable graphs and finiteness of the measure $ m $
	\begin{align*}
	e^{-tL}\ell^{2}(X,m)\subseteq D(\NeuLap)\subseteq D(\NeuDir)\subseteq \ell^{\infty}(X)\subseteq \ell^{2}(X,m).
	\end{align*} 
	Thus, $ e^{-t\NeuLap} $  can be viewed as an operator from $ \ell^{2}(X,m) $ to $ \ell^{\infty}(X) $ composed with the embedding $ \ell^{\infty}(X)\to\ell^{2}(X,m) $ and is, therefore, a Hilbert-Schmidt operator by the factorization principle, see \cite{Sto94}. However, compactness of $e^{-tL} $ is equivalent to the compactness of the embedding of $ D(\NeuDir) $ into $ \ell^{2}(X,m) $ which is equivalent to discreteness of the spectrum of $ \NeuLap $.
\end{proof}

\begin{proposition}\label{KerLN}
 Let $b$ be a connected, canonically compactifiable graph over $(X,m)$  with $ m(X)<\infty $. Then, $\ker(\NeuLap)=\operatorname{span}\{1\}$. Furthermore,  $ \NeuLap $  from $ \{1\} ^{\perp}$ to $ \{1\}^{\perp} $ is invertible.
\end{proposition}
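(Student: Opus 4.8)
The plan is to treat the two assertions separately, handling the kernel first and then deducing invertibility on $\{1\}^\perp$ from the discreteness of the spectrum. For $\ker(\NeuLap)=\operatorname{span}\{1\}$ I would first note that, since $m(X)<\infty$, the constant function $1$ lies in $D(\NeuLap)$ (as recorded after the definition of $\NeuLap$) and the explicit action of $\NeuLap$ gives $\NeuLap 1=0$, so $\operatorname{span}\{1\}\subseteq\ker(\NeuLap)$. For the reverse inclusion, take $u\in\ker(\NeuLap)$. Using that $\NeuLap$ is the self-adjoint operator associated with the form $\NeuDir$, compute $\NeuDir(u)=\langle \NeuLap u,u\rangle=0$, that is $\frac12\sum_{x,y\in X}b(x,y)(u(x)-u(y))^2=0$. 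Hence $u(x)=u(y)$ whenever $b(x,y)>0$, and connectedness of the graph propagates this equality along finite paths of positive weight, forcing $u$ to be constant. This gives $\ker(\NeuLap)\subseteq\operatorname{span}\{1\}$.

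For the invertibility statement I would first record that $\{1\}^\perp$ is a reducing subspace for $\NeuLap$: since $1\in D(\NeuLap)$ with $\NeuLap 1=0$ and $\NeuLap$ is self-adjoint, we have $\langle \NeuLap u,1\rangle=\langle u,\NeuLap 1\rangle=0$ for every $u\in D(\NeuLap)$, so $\NeuLap$ maps $D(\NeuLap)\cap\{1\}^\perp$ into $\{1\}^\perp$. Injectivity of the restriction is then immediate from the kernel computation, because $\ker(\NeuLap)\cap\{1\}^\perp=\operatorname{span}\{1\}\cap\{1\}^\perp=\{0\}$. The crux is the existence of a bounded inverse, and here I would invoke Proposition~\ref{lemma_Sobolev}: the spectrum of $\NeuLap$ is purely discrete, so $0$ is an isolated eigenvalue (of one-dimensional eigenspace $\operatorname{span}\{1\}$). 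Consequently there is a smallest positive eigenvalue $\lambda_1>0$, and by the spectral theorem the restriction of $\NeuLap$ to $\{1\}^\perp$ has spectrum contained in $[\lambda_1,\infty)$. This yields at once surjectivity onto $\{1\}^\perp$ (closed range) together with the bound $\|(\NeuLap|_{\{1\}^\perp})^{-1}\|\le 1/\lambda_1$, hence invertibility.

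The step I expect to require the most care is this last one: self-adjointness together with trivial kernel on $\{1\}^\perp$ does not by itself force a bounded inverse, so the argument genuinely depends on the spectral gap, i.e.\ on $0$ being isolated in $\sigma(\NeuLap)$. This is precisely what the purely discrete spectrum from Proposition~\ref{lemma_Sobolev} supplies. Equivalently, the positivity of $\lambda_1$ can be phrased as a Poincar\'e-type inequality $\|u\|_2^2\le \lambda_1^{-1}\NeuDir(u)$ for $u\in\NeuDom$ with $u\perp 1$, which one obtains from the compact embedding of $(\NeuDom,\|\cdot\|_\NeuDir)$ into $\ltwo$ by a standard min-max argument; either route reduces the claim to locating the bottom of the spectrum on $\{1\}^\perp$.
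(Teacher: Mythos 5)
Your proof is correct and takes essentially the same route as the paper: the invertibility on $\{1\}^\perp$ is obtained, exactly as there, from the purely discrete spectrum supplied by the Sobolev embedding (Proposition~\ref{lemma_Sobolev}), which makes $0$ an isolated simple eigenvalue and yields a spectral gap $\lambda_1>0$, hence a bounded inverse on $\{1\}^\perp$. The only difference is that for $\ker(\NeuLap)=\operatorname{span}\{1\}$ the paper cites \cite[Theorem~7.1]{GHKLW}, whereas you give the standard direct argument via $\NeuDir(u)=\langle \NeuLap u,u\rangle=0$ together with connectedness (and $1\in\ltwo$ from $m(X)<\infty$), which is a perfectly valid self-contained substitute.
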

\begin{proof}
	The first part of the statement carries over from \cite[Theorem~7.1]{GHKLW}. The second part now follows from the fact that by the Sobolov embedding above, $ \NeuLap $ has purely discrete spectrum and is, therefore, bijective on $ \{1\}^{\perp} $.
\end{proof}

In the case of finite measure $ m $ we obviously have
\begin{align*}
\ell^{\infty}(X)\subseteq\ell^{p}(X,m)\subseteq \ell^{1}(X,m),\qquad p\in[1,\infty),
\end{align*}
and, hence,  for $ u\in\ell^{p}(X,m) $ we can define
\begin{align*}
\overline u:=\frac{1}{m(X)}\langle u,1\rangle.
\end{align*}
Next, we come to a Poincar\'e-inequality on canonically compactifiable graphs.

\begin{proposition}[Poincar\'e-inequality]\label{proposition_Poincare}
 Let $b$ be a connected, canonically compactifiable graph over $(X,m)$  with $ m(X)<\infty $. Then, there is $C>0$ such that
 \[\|u-\overline u\|_2^2\leq C\NeuDir(u),\qquad u\in\NeuDom.\] 
\end{proposition}
\begin{proof}
Suppose  there is a sequence $(u_n)$ in $\NeuDom$ such that $ \overline{u_{n}}=0 $,  $\NeuDir(u_n)\to 0$ and $\|u_n\|_2=1$.
Since $ (u_{n}) $ is bounded with respect to $\|\cdot\|_\NeuDir$  it contains an $\ell^{2}$-convergent subsequence by the Sobolev embedding, Proposition~\ref{lemma_Sobolev}. Without loss of generality we assume
 that $(u_n)$ itself is $\ell^{2}$-convergent to a limit  $u$. Obviously, $\|u\|_2=1$ and due to the finiteness of $m$, we have $ \overline u =0$ since
 $ \ell^{2} $-convergence induces $ \ell^{1} $-convergence (by the closed graph theorem).
 Since $\NeuDir$ is lower semicontinuous with respect to $\ltwo$-convergence, we conclude \[\NeuDir(u)\leq\liminf_{n\to\infty} \NeuDir(u_n)=0\] and, therefore $u$ must be constant. Thus, $ \overline{u}=0 $ implies $ u=0 $ which however contradicts $\|u\|_2=1$. Hence the  inequality holds for every $ u \in \NeuDom$ with $ \overline u=0 $. For general $ u\in \NeuDom $ observe that $ \overline{u-\overline u}=0 $ and $\NeuDir( u-\overline u) =\NeuDir( u)$.
 \end{proof}

Next, we discuss the following Trudinger-Moser inequality.
\begin{proposition}[Trudinger-Moser inequality]\label{proposition_Trudinger}
 Let $b$ be a connected, canonically compactifiable graph over  $(X,m)$ with $ m(X)<\infty $. Then, for every $\beta\in\IR$ there is a constant $C>0$  such that
 \[\langle e^{\beta (u-\overline{u})^2},1\rangle\leq m(X)e^{C|\beta|\NeuDir(u)} \]
 for all $  u\in\NeuDom $.
 In particular, for all $ \beta\in\R $ there is $ C=C(\beta) $ such that 
 \begin{align*}
 \langle e^{\beta u^{2}},1\rangle\leq C(\beta)
 \end{align*}
 for all $ u\in\NeuDom $ with $ \NeuDir(u)=1 $ and $ \overline{u}=0 $.
\end{proposition}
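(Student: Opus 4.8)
The plan is to exploit the strong sup-norm embedding available on canonically compactifiable graphs, which makes this inequality considerably softer than its Riemannian counterpart. In the continuum the form domain fails to embed into $L^\infty$ in dimension two, and one needs delicate rearrangement and capacity estimates to establish the Trudinger--Moser inequality. Here the boundedness of finite energy functions lets us control the exponent pointwise, so no such machinery is required.

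First I would reduce to the mean-zero case. Setting $v:=u-\overline{u}$ we have $\overline{v}=0$ and $\NeuDir(v)=\NeuDir(u)$, so it suffices to bound $\langle e^{\beta v^2},1\rangle$ in terms of $\NeuDir(v)$. The crucial step is then to control $\|v\|_\infty$ purely by $\NeuDir(v)$. Combining the sup-norm embedding of Proposition~\ref{lemma_embedding_Neumann_supnorm}, which gives $\|v\|_\infty\le C\|v\|_\NeuDir$ with $\|v\|_\NeuDir^2=\NeuDir(v)+\|v\|_2^2$, together with the Poincar\'e inequality of Proposition~\ref{proposition_Poincare}, which yields $\|v\|_2^2\le C\NeuDir(v)$ since $\overline{v}=0$, I obtain a constant $C_0>0$ such that $\|v\|_\infty^2\le C_0\NeuDir(v)$.

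From here the estimate is immediate. For every $x\in X$ we have $\beta v(x)^2\le|\beta|\,v(x)^2\le|\beta|\,\|v\|_\infty^2\le C_0|\beta|\NeuDir(v)$, and summing against $m$ gives $\langle e^{\beta v^2},1\rangle\le m(X)e^{C_0|\beta|\NeuDir(v)}$, which is the claimed inequality with $C=C_0$. The ``in particular'' assertion follows by specializing to $u$ with $\overline{u}=0$ and $\NeuDir(u)=1$: then $v=u$ and the right-hand side collapses to the constant $C(\beta):=m(X)e^{C_0|\beta|}$.

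Rather than a genuine obstacle, the one point to get right is that the sup-norm bound must be expressed in terms of $\NeuDir(v)$ \emph{alone}. This is precisely why the Poincar\'e inequality enters: it is needed to absorb the $\|v\|_2^2$ term that appears in the form norm $\|v\|_\NeuDir$. Without the preliminary mean-zero normalization this absorption would not be available, and the bound on the right-hand side would no longer depend only on the energy $\NeuDir(u)$.
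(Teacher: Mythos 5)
Your proof is correct and follows essentially the same route as the paper: the sup-norm embedding of Proposition~\ref{lemma_embedding_Neumann_supnorm} combined with the Poincar\'e inequality of Proposition~\ref{proposition_Poincare} gives the pointwise bound $(u(x)-\overline{u})^2\leq C\,\NeuDir(u)$, after which the inequality follows by exponentiating and integrating against $m$. The only cosmetic difference is that the paper dispatches $\beta\leq 0$ separately using $m(X)<\infty$, whereas you absorb both signs at once via $\beta v(x)^2\leq |\beta|\,v(x)^2$; both are fine.
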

\begin{proof}
 The inequality holds for $\beta\leq 0$ since $m(X)<\infty$. Let $\beta>0$. By Proposition~\ref{lemma_embedding_Neumann_supnorm} and the Poincar\'e  inequality, Proposition~\ref{proposition_Poincare}, $$(u(x)-\overline{u})^2\leq\|u-\overline{u}\|_\infty^2\leq C\|u-\overline u\|_\NeuDir^{2}
 =C(\NeuDir(u)+\|u-\overline u\|_2)\leq C \NeuDir(u).$$
 Hence, the statement follows.  
\end{proof}

Next, we show that for canonically compactifiable graphs with finite measure the form domain is invariant under composition with continuously differentiable functions.
\begin{lemma}\label{lemma_diff_in_dom}
 Let $b$ be a connected, canonically compactifiable graph over $(X,m)$ with $ m(X)<\infty $. Let $u\in \NeuDom$  and $f:\IR\to\IR$ be continuously differentiable. Then, $f\circ u\in \NeuDom$.
\end{lemma}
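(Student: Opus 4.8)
The plan is to verify the two membership conditions defining $\NeuDom=\Dgen\cap\ltwo$ separately, namely that $f\circ u$ lies in $\ltwo$ and that its energy $\Qgen(f\circ u)$ is finite. The decisive structural input is canonical compactifiability: since $u\in\NeuDom\subseteq\ell^\infty(X)$, the function $u$ is bounded, say $\|u\|_\infty\le M$, so its range is contained in the compact interval $[-M,M]$. This confines all the analysis of $f$ to a compact set, which is exactly what makes the continuous differentiability of $f$ usable.

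For the $\ell^2$-membership I would note that $f$ is continuous on $[-M,M]$ and hence bounded there, so that $f\circ u\in\ell^\infty(X)$; since $m(X)<\infty$ we have $\ell^\infty(X)\subseteq\ltwo$ and therefore $f\circ u\in\ltwo$. For the energy bound the idea is to control the differences $f(u(x))-f(u(y))$ by the differences $u(x)-u(y)$. Because $f'$ is continuous it is bounded on $[-M,M]$; set $L:=\sup_{|t|\le M}|f'(t)|<\infty$. The mean value theorem then gives $|f(u(x))-f(u(y))|\le L\,|u(x)-u(y)|$ for all $x,y\in X$, i.e.\ $f$ is Lipschitz on the range of $u$. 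Squaring, multiplying by $b(x,y)$ and summing over $x,y$ yields $\Qgen(f\circ u)\le L^2\,\Qgen(u)<\infty$. Combining the two steps gives $f\circ u\in\Dgen\cap\ltwo=\NeuDom$.

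The point that requires care, rather than a genuine obstacle, is that the discrete setting offers no chain rule, as emphasized in the introduction, so one cannot simply express $\Qgen(f\circ u)$ through $f'(u)$ and $\Qgen(u)$; instead the argument must proceed through the pointwise difference estimate above. Everything hinges on the boundedness of $u$: it is only on the compact interval $[-M,M]$ that $f'$ is guaranteed to be bounded and $f$ to be Lipschitz, and without canonical compactifiability a merely continuously differentiable $f$ could destroy finiteness of the energy. Thus the \emph{only} substantial ingredient is the reduction to a compact interval supplied by the hypothesis $\NeuDom\subseteq\ell^\infty(X)$.
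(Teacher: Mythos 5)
Your proof is correct and follows essentially the same route as the paper's: boundedness of $u$ via $\NeuDom\subseteq\ell^\infty(X)$, the mean value theorem to get a Lipschitz bound for $f$ on $[-\|u\|_\infty,\|u\|_\infty]$ yielding $\Qgen(f\circ u)\leq L^2\Qgen(u)$, and $\ell^\infty(X)\subseteq\ltwo$ by finiteness of $m$ for the $\ell^2$-membership. Your remark on the absence of a chain rule matches the paper's own emphasis; nothing is missing.
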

\begin{proof}
Let $u\in  \NeuDom \subseteq\ell^{\infty}(X) $. Since both $f$ and $f'$ are continuous, we get that $f\circ u,f'\circ u\in\ell^{\infty}(X)\subseteq \ell^{2}(X)$. 
Let $ C:= \|f'|_{[-\|u\|_\infty,\|u\|_\infty]}\|_\infty^2$.
For  any
 $x,y\in X$ we infer 
 by the mean value theorem
 that there exists a point $\xi$ between $u(x)$ and $u(y)$ such that
 \[(f\circ u(x)-f\circ u(y))^2=f'(\xi)^2(u(x)-u(y))^2<C(u(x)-u(y))^2\]  
 Thus, $\Qgen(f\circ u)\leq C \Qgen(u)<\infty$ and, hence, $f\circ u\in \NeuDom$.
\end{proof}

\section{The case $c=0$}\label{section_c=0}

\begin{theorem}[The case $c=0$]\label{theorem_case_c_equal_zero}
	Let $b$ be a canonically compactifiable graph over $(X,m)$ such that $ m(X)<\infty $ and $0\neq h\in \ell^2(X,m)$. Then, there is an $u\in D(\NeuLap)$ with $$\NeuLap u=he^u$$ if and only if $\overline h<0$ and $h$ 
	changes sign. 
\end{theorem}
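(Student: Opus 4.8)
The plan is to prove both directions separately, using the Euler--Lagrange/variational structure of the equation together with the compactness and functional inequalities established in Section~\ref{section_graphs}.

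\emph{Necessity.}
Suppose $u\in D(\NeuLap)$ solves $\NeuLap u=he^{u}$. First I would test the equation against the constant function $1$. Since $\NeuDir(u,1)=\langle \NeuLap u,1\rangle$ and $\NeuDir(u,1)=0$ because $1\in\ker(\NeuLap)$ (Proposition~\ref{KerLN}), we obtain $\langle he^{u},1\rangle=0$. As $e^{u}>0$ everywhere, this forces $h$ to take both positive and negative values, i.e.\ $h$ changes sign (recall $h\neq 0$). To see $\overline h<0$, I would test against $e^{-u}$, which lies in $\NeuDom$ by Lemma~\ref{lemma_diff_in_dom} (composition with the $C^{1}$ function $t\mapsto e^{-t}$, noting $u$ is bounded). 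Then $\langle \NeuLap u,e^{-u}\rangle=\NeuDir(u,e^{-u})$, and the left-hand side equals $\langle he^{u},e^{-u}\rangle=\langle h,1\rangle=m(X)\,\overline h$. The form term $\NeuDir(u,e^{-u})=\tfrac12\sum_{x,y}b(x,y)(u(x)-u(y))(e^{-u(y)}-e^{-u(x)})$ is a sum of terms of the form $(a-b)(e^{-b}-e^{-a})$, each of which is $\le 0$ since $e^{-t}$ is decreasing, and is strictly negative unless $u$ is constant. Since $u$ constant would force $h\equiv 0$, we conclude $m(X)\,\overline h<0$, hence $\overline h<0$.

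\emph{Sufficiency.}
Assume $\overline h<0$ and $h$ changes sign. I would construct a solution by minimizing the energy functional
\begin{align*}
J(u)=\tfrac12\NeuDir(u)-\langle h,e^{u}\rangle
\end{align*}
over the affine subspace $\{u\in\NeuDom: \overline u = s\}$ for a suitably chosen constant $s$, or over all of $\NeuDom$ after fixing the mean by a Lagrange-type argument. The idea is that critical points satisfy $\NeuDir(u,v)=\langle he^{u},v\rangle$ for all $v\in\NeuDom$, which is the weak form of $\NeuLap u=he^{u}$; the constraint $\langle he^{u},1\rangle=0$ coming from testing with $1$ is compatible precisely because $h$ changes sign and $\overline h<0$. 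Concretely I would first restrict to the set where $\langle he^{u},1\rangle=0$ and show $J$ is bounded below and coercive there. Coercivity is where the Trudinger--Moser inequality (Proposition~\ref{proposition_Trudinger}) enters: writing $u=\overline u+w$ with $\overline w=0$, the term $\langle h,e^{u}\rangle=e^{\overline u}\langle h,e^{w}\rangle$ must be controlled by $\exp$ of a multiple of $\NeuDir(w)^{1/2}$ via Cauchy--Schwarz ($h\in\ell^{2}$) and the bound $\langle e^{\beta w^{2}},1\rangle\le C(\beta)$; the condition $\overline h<0$ is what prevents $\overline u\to-\infty$ from sending $J\to-\infty$, pinning down the mean.

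\emph{Main obstacle.}
The hardest part will be the compactness/regularity step: producing from a minimizing sequence an actual minimizer lying in the operator domain $D(\NeuLap)$ (a strong solution), not merely a weak one. The Sobolev embedding (Proposition~\ref{lemma_Sobolev}) gives a subsequence converging in $\ltwo$, and lower semicontinuity of $\NeuDir$ handles the quadratic part; the delicate point is passing to the limit in the nonlinear term $\langle h,e^{u_{n}}\rangle$, which requires uniform integrability of $e^{u_{n}}$ against $h$ — again supplied by Trudinger--Moser to dominate the exponential — together with the $\ell^{2}$-convergence (and hence pointwise convergence along a further subsequence) to identify the limit. Because, as the authors note, there is no elliptic regularity and no chain rule available in the discrete setting, upgrading the weak Euler--Lagrange identity to membership in $D(\NeuLap)$ must be done by hand: one shows directly that the functional $v\mapsto\langle he^{u},v\rangle$ is $\ltwo$-bounded (using $h\in\ell^{2}$, $u\in\ell^{\infty}$, and finiteness of $m$), so by the very definition of $D(\NeuLap)$ the minimizer lies in the domain and solves the equation strongly.
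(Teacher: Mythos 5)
Your necessity argument is correct and coincides with the paper's: testing with $1$ yields $\langle he^{u},1\rangle=0$ and hence the sign change, and testing with $e^{-u}$ (admissible by Lemma~\ref{lemma_diff_in_dom}) yields $m(X)\overline h=\NeuDir(u,e^{-u})<0$, with strictness because a constant $u$ would force $h\equiv 0$.

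The sufficiency direction, however, has a genuine gap. Your primary plan --- minimizing $J(u)=\tfrac12\NeuDir(u)-\langle h,e^{u}\rangle$ over an affine slice $\{\overline u=s\}$ --- cannot work, because $J$ is unbounded below on every such slice as soon as $h$ is positive somewhere, which is exactly the hypothesis here. Indeed, take $x_{0}$ with $h(x_{0})>0$ and $u_{t}=s+t\bigl(1_{x_{0}}-m(x_{0})/m(X)\bigr)$: then $\NeuDir(u_{t})$ grows like $t^{2}$ while $\langle h,e^{u_{t}}\rangle$ grows exponentially in $t$, so $J(u_{t})\to-\infty$. Trudinger--Moser cannot rescue coercivity: it gives only $\langle h,e^{u-\overline u}\rangle\leq C(\varepsilon)e^{\varepsilon \NeuDir(u)}$, i.e.\ a bound exponential in $\NeuDir(u)$, which $\tfrac12\NeuDir(u)$ never dominates. (In the paper's $c>0$ case this inequality does yield coercivity, but only because there the nonlinearity enters the functional through $\overline v=\log e^{\overline v}$, i.e.\ after a logarithm; no such mechanism is available when $c=0$.) Your fallback --- restricting to $B'=\{u:\langle he^{u},1\rangle=0\}$ --- does work, but observe that on this set your functional degenerates: $\langle h,e^{u}\rangle=\langle he^{u},1\rangle=0$, so you are simply minimizing $\NeuDir$, which is precisely what the paper does (adding the normalization $\overline u=0$, harmless since $B'$ is invariant under additive constants, to obtain boundedness of minimizing sequences from the Poincar\'e inequality). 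You also omit showing the constraint set is nonempty, which in the paper uses $h(x_{0})>0$ via the one-parameter family $t1_{x_{0}}$.

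A second gap: your assertion that critical points satisfy $\NeuDir(u,v)=\langle he^{u},v\rangle$ for all $v$ is false for the constrained problem. A constrained minimizer $w$ satisfies the equation only up to multipliers, $2\NeuDir(w,g)=\lambda\langle ghe^{w},1\rangle+\mu\langle g,1\rangle$, and invoking the Lagrange multiplier theorem requires verifying surjectivity of the constraint derivatives, which uses that $h$ takes both signs. One must then show $\mu=0$ (test with $1$ and use $\langle he^{w},1\rangle=0$) and $\lambda>0$; this last step, via $0>2\NeuDir(w,e^{-w})=\lambda\langle h,1\rangle$, is where $\overline h<0$ actually enters the sufficiency direction --- not in ``preventing $\overline u\to-\infty$'' as you suggest --- and the multiplier is finally absorbed by the constant shift $u=w+\log(\lambda/2)$, exploiting $he^{u+\sigma}=e^{\sigma}he^{u}$. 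Your discussion of compactness of minimizing sequences and of upgrading the weak identity to membership in $D(\NeuLap)$ via the definition of the operator domain is sound and matches the paper.
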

\begin{proof}
 Suppose $u\in D(\NeuLap)$ is a solution. Then, $ u\in\ell^{\infty}(X) $ since the graph is graph canonically compactifiable and, hence,    $he^{u}\in\ell^{2}(X,m)  $. Furthermore, $ m(X)<\infty $ also implies 
 $1\in \NeuDom$ and as $ u  $ is a solution, we obtain
  \[\langle he^{u},1\rangle=\langle\NeuLap u,1\rangle=\NeuDir(u,1)=0.\] 
 Hence, $h$ must change sign, as $h\not\equiv 0$ by assumption. Then,  since $e^{-u}\in \NeuDom$ by Lemma~\ref{lemma_diff_in_dom}, we  compute 
 \begin{align*}
 \langle h,1\rangle=\langle he^{u},e^{-u}\rangle=\langle \NeuLap u, e^{-u}\rangle=\NeuDir(u,e^{-u})<0,
 \end{align*}
 where the last inequality follows by definition of $ \NeuDir $ and the fact that $ t\mapsto e^{-t} $ is monotone decreasing. Furthermore,  the inequality is strict since $u$ can not be a constant if $h$ changes its sign, or otherwise we would have $0=Lu=he^u$ and, therefore, $h\equiv 0$ which contradicts our assumption.
 
 Let us turn to the other direction. So, assume $h\in\ell^2(X,m)$ is such that  $\overline h<0$ and there is $ x_{0}\in X $ such that $$ h(x_0)>0. $$ 
 Define the set 
 \[B=\{v\in \NeuDom\mid \langle he^{v},1\rangle= \langle v,1\rangle=0\}.\] 
 The strategy of the proof is to show that $ Q $ admits a minimizer on $ B $ which turns out to be a solution up to an additive constant. As a first step we show that $ B $ is non empty.\\

 \textbf{Claim 1:} The set $B$ is not empty.
 \begin{proof}[Proof of Claim 1] Define $v_0=1_{x_0}$. Then, $tv_0\in \NeuDom$ for every $t\in\IR$ and a simple calculation shows
 	\[\langle he^{tv_0},1\rangle=(e^t-1)h(x_0)m(x_0)+\langle h,1\rangle.\] Thus, $$F:[0,\infty)\to \IR,\quad t\mapsto \langle he^{tv_0},1\rangle$$ is a continuous map with 
 	$F(0)<0$ and $F(t)>0$ for large $t$. Hence, there is $t_0$ such that $F(t_0)=0$. Then, $v=t_0v_0-\frac{t_0m(x_0)}{ m(X)}\in\NeuDom$ and $v$ satisfies \[\langle v,1\rangle=0,\] and 
 	\[\langle he^{v},1\rangle =
 	e^{-\frac{t_0m(x_0) }{m(X)}}\langle he^{t_{0}v_{0}},1\rangle=e^{-\frac{t_0 m(x_0)}{m(X)}}F(t_0)=0.\] Thus, we get $v\in B$. 
 \end{proof}

 \textbf{Claim 2:} There is a minimizer of $ \NeuDir $ in  $ B$.
 \begin{proof}[Proof of Claim 2]
 	Let $(w_n)$ be a sequence in $B$ such that $\NeuDir(w_n)\to \inf_{w\in B}\NeuDir(w)$. Using the Poincar\'e 
 	inequality, Proposition~\ref{proposition_Poincare}, we infer that $(w_n)$ is a bounded sequence with respect to $\|\cdot\|_\NeuDir$. Hence, $ (w_{n}) $ has an $\ltwo$-convergent subsequence by the Sobolev Embedding, Proposition~\ref{lemma_Sobolev}, and we assume without loss of generality  that 
 	$(w_n)$ itself is $\ltwo$-convergent.  We denote the limit by $w$.
 We infer $w\in D(\NeuDir)$ by  the lower semi-continuity of $\NeuDir$, i.e.,
 	\[\NeuDir(w)\leq \liminf_{n\to\infty}\NeuDir(w_n)=\inf_{w\in B}\NeuDir(w).\] 
 	 	Next we show $w\in B$.  By Proposition~\ref{lemma_Sobolev} we have $\sup_{n\in\IN}\|w_n\|_\infty<\infty$. Hence, by the finiteness of $m$ and Lebesgue's dominated convergence theorem we observe \[\langle w,1\rangle=\lim_{n\to\infty}\langle w_n,1\rangle=0.\] Furthermore, we have 
 	$|he^{w}-he^{w_n}| \leq |h|(e^{\|w\|_\infty}+e^{\sup_{n\in\IN}\|w_n\|_\infty})$ and another application of Lebesgue's  dominated convergence theorem yields
 	$$\langle he^{w},1\rangle=\lim_{n\to\infty}  \langle he^{w_n},1\rangle=0.$$
 	This shows $w\in B$ and, therefore, $ \inf_{w\in B}\NeuDir(w)\leq \NeuDir(w)$. This finishes the proof of the claim.
 \end{proof}
 Let $w$ be a minimizer of $\NeuDir$ on $B$.
 Then, the map $\NeuDom\to\IR$, $f\mapsto \NeuDir(f)$ has a minimum for $f=w$ under the restrictions 
 \[\langle he^{f},1\rangle=0\qquad \mbox{and} \qquad\langle f,1\rangle=0.\] 
 We  apply the  Lagrange multiplier theorem  in Banach spaces, c.f. \cite[Theorem 43.D]{zeidler}.
 For this, we calculate the Fréchet derivatives of the maps 
 \begin{align*}
 Q&:D(\NeuDir)\to\R, \quad f\mapsto \NeuDir(f)\\
  F&:D(\NeuDir)\to\R,\quad  f\mapsto\langle f,1\rangle\\
  H&:D(\NeuDir)\to\R,\quad  f\mapsto\langle he^{f},1\rangle.
 \end{align*}
 
 \textbf{Claim 3:} The maps $ Q$ , $F$  and  $H $ are continuously Fréchet differentiable with Fréchet derivatives at $ f\in D(\NeuDir) $
 given by
 \begin{align*}
 D Q[f](\cdot)  =2\NeuDir(f,\cdot),\quad D F[f](\cdot)= \langle \cdot ,1\rangle,\quad D H[f](\cdot)=\langle \cdot he^{f},1\rangle.
 \end{align*}
 \begin{proof}[Proof of Claim 3]
 	For the derivative of $Q$ we calculate  
 	\begin{align*}
 	\frac{|\NeuDir(f+g)-\NeuDir(f)-2\NeuDir(f,g)|}{\|g\|_\NeuDir}
 	&=\frac{\NeuDir(g)}{\|g\|_\NeuDir}\leq\frac{\|g\|_\NeuDir^2}{\|g\|_\NeuDir}\to 0,
 	\end{align*}  
 	for $\|g\|_\NeuDir\to 0$. By the Cauchy-Schwarz-inequality the  Fréchet derivative $ DQ[f](\cdot) =2Q(f,\cdot)$ is continuous.
 	
The statement for $F$ follows  easily as $ F $ is linear.

 	Finally, for the derivative of $H$ we estimate using that 
 	$$\frac{\|e^g-1-g\|_\infty}{\|g\|_{\NeuDir}}\leq C\frac{\sum_{k=2}^\infty \frac{\|g\|_\infty^k}{k!}}{\|g\|_\infty}
 	\leq C\sum_{k=1}^\infty\frac{\|g\|_\infty^k}{k!}=
 	 C( e^{\|g\|_{\infty}}-1)\to 0$$ for  $ \|g\|_{\NeuDir}\to0 $	 
 	  since $\|\cdot\|_\NeuDir$-convergence implies $\|\cdot\|_\infty$-convergence by Proposition~\ref{lemma_Sobolev}, and 
 	 \begin{align*}
 	 \frac{|\langle he^{f+g},1\rangle-\langle he^f,1\rangle-\langle hge^f,1\rangle|}{\|g\|_\NeuDir}
 	 =&\frac{|\langle he^{f}(e^g-1-g),1\rangle|}{\|g\|_\NeuDir}
 	  	\to0
 	 \end{align*}
 	 	To see that the Fr\'echet-derivative of $ H $ is continuous, we estimate by the Cauchy-Schwarz-inequality and $ \|\cdot\|_{2}\leq \|\cdot\|_{\NeuDir} $
 	\begin{align*}
 |DH[f-g](\ph)|=	|\langle \varphi h(e^f-e^g),1\rangle|&=|\langle h(e^f-e^g),\varphi\rangle|\\
 	&\leq \|h(e^f-e^g)\|_2\|\varphi\|_\NeuDir
 	\end{align*}
 This proves the continuity of the Fréchet derivative of $H$.
 \end{proof}
 Let $ w $ be a minimizer of $ Q $ on $ B $ whose existence was proven above.
 The map $$(DF[w],DH[w]): D(\NeuDir)\to\IR^2,\quad f\mapsto (\langle f,1\rangle,\langle f he^{w},1\rangle) $$ is surjective, since there are $x_0,x_1\in X$ with $h(x_0)>0, h(x_1)<0$ by assumption, and 
 $1_{x_0},1_{x_1}\in \NeuDom$, and, thus, ${DF[w](1_{x_0}), DF[w](1_{x_1})>0}$ and $DH[w](1_{x_0})>0, DH[w](1_{x_0})<0$. 
 
  Thus, we can apply the Lagrange multiplier theorem in Banach spaces and infer that there are $\lambda,\mu\in\IR$ such that
 \[2\NeuDir(w,g)=\lambda \langle ghe^{w},1\rangle+\mu \langle g,1\rangle=
  \langle \lambda he^{w} +\mu,g\rangle\] holds for every $g\in\NeuDom$, where $\lambda he^{w}+\mu\in\ltwo$ as $m$ is finite and $w\in D(\NeuDir)$ is 
 bounded, since the graph is canonically compactifiable. Therefore, by definition, $w\in D(\NeuLap)$ and $$ \NeuLap w=\frac\lambda2 he^{w}+\frac\mu2. $$
 
 Furthermore, by $1\in \NeuDom$ we deduce \[0=2\NeuDir(w, 1)=\lambda\langle he^{w},1\rangle +\mu m(X).\]
 Since $w\in B$ we see $\langle he^{w},1\rangle=0$ and, thus, $\mu=0$.
 
 Finally, we show $\lambda>0$. It is easy to see that $\lambda$ can not be zero, since otherwise we would infer $w\equiv 0$ and the constant zero is not in $B$.
 Thus, we have $0>2\NeuDir(w,e^{-w})=\lambda\langle h,1\rangle$, where the first inequality can be seen by an easy calculation. Since by assumption $\langle h,1\rangle=\overline h m(X)<0$, we infer $\lambda>0$. 
 Therefore, we get $\frac\lambda2 =e^\sigma$ for some $\sigma\in\R$ and the function $u=w+\sigma\in D(\NeuLap)$ is a solution. This concludes the proof.
\end{proof}

\section{The case $c>0$}\label{section_c>0}
\begin{theorem}[The case $c>0$]
	Let $b$ be a canonically compactifiable graph over $(X,m)$ such that $ m(X)<\infty $. Let $c>0$ and $0\neq h\in \ell^2(X,m)$. Then, there is an $u\in D(\NeuLap)$ with \[\NeuLap u=he^u-c\] if and only if $h$ is positive somewhere.  
\end{theorem}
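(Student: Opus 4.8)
The plan is to treat the two implications separately: necessity comes from testing the equation against the constant function, and sufficiency from minimizing an energy functional over a single nonlinear constraint, following the scheme of Theorem~\ref{theorem_case_c_equal_zero} but with the linear term $c\langle u,1\rangle$ built into the functional to absorb the constant $c$.

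For necessity, suppose $u\in D(\NeuLap)$ solves $\NeuLap u=he^u-c$. Since $m(X)<\infty$ we have $1\in\NeuDom$, so testing gives $0=\NeuDir(u,1)=\langle\NeuLap u,1\rangle=\langle he^u,1\rangle-c\,m(X)$, whence $\langle he^u,1\rangle=c\,m(X)>0$. As $e^u>0$ pointwise, $h$ must be positive somewhere.

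For sufficiency, assume $h(x_0)>0$. I would fix a suitable $K>0$ and set
\[\mathcal B=\{u\in\NeuDom\mid\langle he^u,1\rangle=K\},\qquad J(u)=\tfrac12\NeuDir(u)+c\langle u,1\rangle,\]
and minimize $J$ over $\mathcal B$. Nonemptiness of $\mathcal B$ follows as in Claim~1: for $v_0=1_{x_0}$ the map $t\mapsto\langle he^{tv_0},1\rangle=(e^t-1)h(x_0)m(x_0)+\langle h,1\rangle$ is continuous and tends to $+\infty$, so $K$ can be chosen in its range. The crucial point, and the main obstacle, is that $J$ is bounded below and coercive on $\mathcal B$. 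Writing $u=\overline u+w$ with $\overline w=0$, the constraint forces $\overline u=\ln K-\ln\langle he^w,1\rangle$ (in particular $\langle he^w,1\rangle>0$), and since $\NeuDir(u)=\NeuDir(w)$ one gets $J(u)=\tfrac12\NeuDir(w)+c\,m(X)\ln K-c\,m(X)\ln\langle he^w,1\rangle$. This is where the Trudinger-Moser inequality is essential: using Cauchy-Schwarz, the elementary bound $2w\le\beta^{-1}+\beta w^2$, and Proposition~\ref{proposition_Trudinger} with $\overline w=0$, I obtain $\langle he^w,1\rangle\le\|h\|_2(e^{1/\beta}m(X))^{1/2}e^{\frac{C\beta}{2}\NeuDir(w)}$. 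Choosing $\beta$ so small that $c\,m(X)C\beta<1$ yields $J(u)\ge\delta\NeuDir(w)-C'$ with $\delta>0$, so $J$ is bounded below and controls $\NeuDir(w)$.

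Given this, a minimizing sequence $(u_n)$ has $\NeuDir(u_n)=\NeuDir(w_n)$ bounded; since $J(u_n)$ is bounded too, the identity above pins $\langle he^{w_n},1\rangle$ between positive constants, so $\overline{u_n}$ is bounded, and with the Poincar\'e inequality (Proposition~\ref{proposition_Poincare}) the sequence is bounded in $\|\cdot\|_\NeuDir$. By the compact Sobolev embedding (Proposition~\ref{lemma_Sobolev}) I pass to an $\ell^2$-convergent subsequence; the uniform sup-norm bound from Proposition~\ref{lemma_embedding_Neumann_supnorm} together with dominated convergence shows the limit $w$ still lies in $\mathcal B$, while lower semicontinuity of $\NeuDir$ and $\ell^1$-continuity of $\langle\cdot,1\rangle$ make $w$ a minimizer. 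The Fr\'echet derivatives are computed as in Claim~3, and since $v\mapsto\langle he^{w}v,1\rangle$ is nonzero (test $v=1_{x_0}$) the Lagrange multiplier theorem \cite{zeidler} gives $\lambda\in\IR$ with $\NeuDir(w,v)+c\langle v,1\rangle=\lambda\langle he^{w}v,1\rangle$ for all $v\in\NeuDom$; hence $\NeuLap w=\lambda he^w-c$ with $w\in D(\NeuLap)$ (as $he^w\in\ltwo$ and $w$ is bounded). Testing against $1$ gives $c\,m(X)=\lambda K$, so $\lambda=c\,m(X)/K>0$ automatically, and $u:=w+\ln\lambda$ satisfies $\NeuLap u=he^u-c$, which completes the proof.
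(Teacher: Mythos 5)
Your proposal is correct and follows essentially the same route as the paper: it minimizes the same functional $J(v)=\frac12 Q(v)+c\langle v,1\rangle$ (the paper writes this as $\frac12 Q(v)+cm(X)\overline{v}$) over a constraint set of the form $\{\langle he^{v},1\rangle=\mathrm{const}\}$, proves coercivity via Cauchy--Schwarz and the Trudinger--Moser inequality, extracts a minimizer through the compact Sobolev embedding, the Poincar\'e inequality and dominated convergence, and concludes with Zeidler's Lagrange multiplier theorem, exactly as in the paper. The only (harmless, and mildly simplifying) deviation is that you fix a flexible constraint level $K>0$ in the range of $t\mapsto\langle he^{t1_{x_0}},1\rangle$ and normalize at the end by $u=w+\ln\lambda$ with $\lambda=cm(X)/K>0$, whereas the paper insists on $K=cm(X)$ -- which forces its two-parameter family $v_{t,l}=t1_{x_0}-l$ and a connectedness argument in Claim~1, but yields $\lambda=1$ without any final shift.
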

\begin{proof}
	Suppose $u\in D(\NeuLap)$ is a solution. Then, by \[0=\NeuDir(u,1)=\langle \NeuLap u, 1 \rangle=\langle -c,1 \rangle+\langle he^u,1 \rangle\] we infer \[cm(X)=\langle he^{u},1 \rangle\] and, since 
	$c>0$, the function $h$ must be positive somewhere.
	
	Now suppose that $h$ is positive somewhere. Define \[B=\{v\in \NeuDom\mid \langle he^{v}-c,1 \rangle =0\}\] 
		and 
		$$J:D(\NeuDir)\to\R,\qquad		
		J(v):=\frac12\NeuDir(v)+cm(X)\overline{v}.$$
	The strategy of the proof is similar to the proof of Theorem~\ref{theorem_case_c_equal_zero} above. We show that $ J $ assumes a minimum on $ B $ which turns out to be a solution. To this end we first show that $ B $ is non-empty and $ J $ is bounded below on $ B $. Afterwards we show that $ J $ assumes a minimum and then we show that this minimum is a solution by the virtue of Lagrange multipliers.\\
	
	\textbf{Claim 1:} The set $B$ is not empty. 
	\begin{proof}[Proof of Claim 1]
		Let $x_0\in X$ be such that 
		$h(x_0)>0$ and define ${v_{t,l}=t1_{x_0}-l}$ for $t,l\in \IR$. 
		We get $v_{t,l}\in \NeuDom$ for every $t,l\in \IR$. Define $F:\IR^2\to\IR$,
		\[F(t,l):=\langle he^{v_{t,l}},1 \rangle=(e^t-1)e^{-l}h(x_0)m(x_0)+e^{-l}\langle h,1 \rangle\]
		which  is obviously continuous. Furthermore, for fixed 
		$t$ we infer $F(t,l)\to 0$ for $l\to\infty$ and for fixed $l$ we infer $F(t,l)\to \infty $ for $t\to\infty$. 
		Since the continuous image of connected sets is connected, we conclude that there are $t_0,l_0\in\IR$ such that $F(t_0,l_0)=cm(X)$ and, hence, $v_{t_0,l_0}\in B$. 
	\end{proof}
Next we show that $ J $ is bounded from below on $ B $.\\

	\textbf{Claim 2:} There is a constant $C$ such that 
	$J(v)\geq \frac14\NeuDir(v)+C$, $v\in B$.
	\begin{proof}[Proof of Claim 2]
		For $v\in B$ we infer
		$$  e^{\overline{v}}=\frac{cm(X)}{\langle he^{v-\overline v},1 \rangle}  $$
		from the calculation
		\begin{align*}
		\langle he^{v-\overline v},1 \rangle=e^{-\overline{v}}\langle he^{v},1 \rangle=e^{-\overline{v}}cm(X).
		\end{align*}
		
		Therefore, \begin{align*}
		J(v)&=\frac12\NeuDir(v)+cm(X)\overline{v}\\&=\frac12\NeuDir(v)+cm(X)\log(e^{\overline{v}})\\
		&=\frac12\NeuDir(v)+cm(X)\log(cm(X))-cm(X)\log(\langle he^{v-\overline v},1 \rangle).
		\end{align*}
		The idea is now to control the third term on the right hand side by $ \NeuDir(v) $.
		To this end we  apply the Cauchy-Schwarz inequality, the basic inequality 
		$2st\leq 2\varepsilon s^2+\frac{2}{\varepsilon}t^2$ for $ s=\NeuDir(v)^{1/2} $, $ t=(v-\overline v)/\NeuDir(v)^{1/2} $ and $ \varepsilon>0 $, and, the Trudinger-Moser inequality, Proposition~\ref{proposition_Trudinger} for the function $ u=(v-\overline v)/\NeuDir (v)={ (v-\overline v)}/\NeuDir(v-\overline v)$ and $ \beta=2/\varepsilon $
		\begin{align*}
		\langle he^{v-\overline v},1 \rangle^{2}\leq\|h\|_{2}^{2}		\langle e^{2(v-\overline v)},1 \rangle\leq \|h\|_{2}^{2}\langle e^{2\varepsilon\NeuDir(v)+\frac{2(v-\overline v)^2}{\varepsilon \NeuDir(v)}},1 \rangle
		\leq \|h\|_{2}^{2}Ce^{2\varepsilon\NeuDir(v)}.
		\end{align*}
		Therefore, by the computation for $ J(v) $ above we obtain with  $\varepsilon={1}/({4cm(X)})$
		\begin{align*}
		J(v)
		&\geq\frac12\NeuDir(v)+cm(X)\log(cm(X))-cm(X) \log(C\|h\|_2 e^{\varepsilon\NeuDir(v)})\\
		&= \frac14 \NeuDir(v)+cm(X)\log\left(\frac{cm(X)}{C\|h\|_2}\right)\\
		&= \frac14 \NeuDir(v)+C.
		\end{align*}
	\end{proof}
	
	\textbf{Claim 3:} There is a minimizer of $J$ in $B$.  
	\begin{proof}[Proof of Claim 3]
		Let $u_n$ be a minimizing sequence in $B$ such that $J(u_n)\to  \inf_{w\in B}J(w)>-\infty$. This yields by Claim~2 and by  the Poincar\'e inequality, Proposition~\ref{proposition_Poincare}, that the sequence $(u_k-\overline u_{k})$ is a bounded sequence in 
		$(\NeuDom,\|\cdot\|_\NeuDir)$. By the definition of $ J $ \[\overline{u}_k=\frac{1}{m(X)}(J(u_k)-\NeuDir(u_k))\] we infer that the sequence $(\overline{u}_k)$ is bounded in $\IR$. Hence, by 
		\[\|u_k\|_\NeuDir\leq \|u_k-\overline{u}_{k}\|_\NeuDir+\|\overline{u}_{k}\|_\NeuDir=\|u_k-\overline{u}_{k}\|_\NeuDir+\overline{u_k}m(X)^{\frac12}\] the sequence $(u_k)$ is bounded in $(\NeuDom,\|\cdot\|_\NeuDir)$.
		By  the weak compactness of closed balls in Hilbert spaces $ (u_{k}) $ has a $\|\cdot\|_\NeuDir$ weakly convergent subsequence  which is without loss of generality $ (u_{k}) $ itself. By the Sobolev Embedding, Proposition~\ref{lemma_Sobolev}, $ (u_{k}) $ converges to an $ u $ in $ \ell^{2}(X,m) $ which is in $ D(\NeuDir) $ as well.
		
		 Furthermore, by Proposition~\ref{lemma_Sobolev} we infer
		$\sup_{n\in\IN}\|u_k\|_\infty<\infty$ and, hence, $u$ is bounded as well. 	By the inequality $|he^{u}-he^{u_n}| \leq |h|(e^{\|u\|_\infty}+e^{\sup_{n\in\IN}\|u_n\|_\infty})$ we can apply Lebesgue's  dominated convergence theorem to obtain
		\[\langle he^{u},1 \rangle=\lim_{n\to\infty}  \langle he^{u_n},1 \rangle=cm(X).\]
		Therefore $u\in B$ and, by the lower semi-continuity of $\NeuDir$, we infer \[J(u)\leq \liminf_{n\to\infty}J(u_n)=\inf_{v\in B}J(v)\] and we conclude that $u$ is a minimizer.  
	\end{proof}
	Let $u\in B$ be a minimizer of $J$ on $B$.
	Then the map $J$ has a minimum for $u\in D(\NeuDir)$ under the restriction given by the map 	$ I: D(\NeuDir)\to \R  $
		\[I(f):=\langle he^{f}-c,1 \rangle=0.\]  
	So, our next goal is to apply the theorem about Lagrange multipliers in Banach spaces, c.f. \cite{zeidler}[Theorem 43.D]. For this we 
	need to calculate the Fréchet derivatives of $ J $ and $ I $.\\

	\textbf{Claim 4:} The maps $ J $ and $ I $  are continuously Fréchet differentiable and have Fréchet derivatives at $ f\in D(\NeuDir) $ given by
	\begin{align*}
	DJ[f](\cdot)=\NeuDir(f,\cdot)+c\langle\cdot,1 \rangle,\qquad DI[f](\cdot)=\langle \cdot he^{f},1 \rangle.
	\end{align*}
	\begin{proof}[Proof of Claim 4] With the notation from the proof  of Theorem~\ref{theorem_case_c_equal_zero} the maps $ J $ and $ I $ can be represented as
		\begin{align*}
		J=\frac12Q+cF\quad \mbox{and}\quad I= H-cF.
		\end{align*}
	Hence, the statement follows from Claim~3 of the proof of Theorem~\ref{theorem_case_c_equal_zero}.
	\end{proof}  
	It is easy to see that the map $DI[f](\cdot):\NeuDom\to\IR$ is surjective. Hence, we can apply \cite{zeidler}[Theorem 43.D] and infer the existence of $\lambda\in\IR$ such that   
	\[\NeuDir(u,f)+c\langle f,1 \rangle=\lambda \langle fhe^{u},1 \rangle\] holds for every $f\in\NeuDom$ and the minimizer $ u $. 

	Hence, \[\NeuDir(u,f)=\langle -c+\lambda he^{u},f \rangle\] for every $f\in \NeuDom$. 
	Note that $\lambda he^{u}-c\in\ltwo$ as $u$ is bounded and $m$ is finite.
	Therefore, by definition, $u\in D(\NeuLap)$ and $$ \NeuLap u=\lambda he^{u}-c. $$	
	By $1\in \NeuDom$ we infer \[0=\NeuDir(u, 1)=\lambda\langle he^{u},1 \rangle - cm(X).\]
	Since $u\in B$ we see $\langle he^{u},1 \rangle=cm(X)$ and, thus, $\lambda=1$.
	This concludes the proof.
\end{proof}

 \section{The case $ c<0 $}\label{section_c<0}
In this section we treat the case $ c<0 $ and prove the following theorem.

\begin{theorem}[The case $ c<0 $]\label{t:c<0}
	Let $b$ be a connected, canonically compactifiable graph over $(X,m)$ and $ m(X)<\infty $. Let  $0\neq h\in\ell^2(X,m)$ and  $c>0$. Then, the following holds:
	 \begin{itemize}
	 	\item [(1)] If there is $u\in D(\NeuLap)$ with $\NeuLap u=he^u-c$, then $\overline{h}<0$.
	 	\item[(2)]  If $\overline{h}<0$, then there exists a constant $-\infty \leq c_-(h)<0$ such that the equation $\NeuLap u=he^u-c$ has a solution for all $0>c>c_-(h)$ and no solution for every $c<c_-(h)$. If $ h\leq0 $, then $ c_{-}(h)=-\infty $.
	 \end{itemize}
\end{theorem}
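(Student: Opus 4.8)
The plan is to prove the necessary condition (1) by a single test–function computation, and to obtain the threshold in (2) from a monotonicity principle in $c$ together with a non-emptiness statement for $c$ near $0$. (Throughout I read the equation for $c<0$, so that $-c>0$ plays the role of a positive source.) \emph{Necessity.} Suppose $u\in D(\NeuLap)$ solves $\NeuLap u=he^{u}-c$. Since the graph is canonically compactifiable, $u\in\ell^{\infty}(X)$, and hence $e^{-u}\in\NeuDom$ by Lemma~\ref{lemma_diff_in_dom}. Testing the equation against $e^{-u}$ and using that $\NeuLap$ is associated with $\NeuDir$ gives $\langle h,1\rangle-c\langle e^{-u},1\rangle=\langle he^{u}-c,e^{-u}\rangle=\NeuDir(u,e^{-u})$. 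As $t\mapsto e^{-t}$ is decreasing, every summand of $\NeuDir(u,e^{-u})$ is $\le0$, so $\NeuDir(u,e^{-u})\le0$; since $c<0$ and $\langle e^{-u},1\rangle>0$ this forces $\overline h\,m(X)=\langle h,1\rangle\le c\langle e^{-u},1\rangle<0$, i.e. $\overline h<0$. This is the computation already used for $c=0$ in Theorem~\ref{theorem_case_c_equal_zero}, now with the extra term $-c\langle e^{-u},1\rangle$ providing the strict sign.

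For (2) I would set $\Sigma:=\{c<0\mid \NeuLap u=he^{u}-c\text{ is solvable in }D(\NeuLap)\}$ and $c_-(h):=\inf\Sigma\in[-\infty,0]$. The heart of the matter is a monotonicity principle: if $c_{1}\in\Sigma$ and $c_{1}<c_{2}<0$, then $c_{2}\in\Sigma$. Indeed, a solution $u_{1}$ at $c_{1}$ satisfies $\NeuLap u_{1}=he^{u_{1}}-c_{1}\ge he^{u_{1}}-c_{2}$, so $u_{1}$ is a supersolution at $c_{2}$; coupling it with a subsolution lying below $u_1$ and running a monotone iteration for the positivity–preserving resolvent of a shifted operator $\NeuLap+K$ produces a solution at $c_{2}$ squeezed between the two. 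Granting this, $\Sigma$ is an interval with right endpoint $0$, so $\Sigma\supseteq(c_-(h),0)$, while by the very definition of the infimum no $c<c_-(h)$ lies in $\Sigma$; this is exactly the asserted dichotomy.

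It remains to produce solutions, and here the two regimes genuinely differ. If $h\le0$, then $-\langle he^{v},1\rangle=\langle|h|e^{v},1\rangle$, so the \emph{unconstrained} functional $J_{c}(v)=\tfrac12\NeuDir(v)+\langle|h|e^{v},1\rangle+c\,m(X)\overline v$ is convex on $\NeuDom$, and a direct computation shows its Euler--Lagrange equation is precisely $\NeuLap v=he^{v}-c$. Minimizing first over the mean $\overline v$ and bounding $\langle|h|e^{v-\overline v},1\rangle$ from below by means of the sup–norm embedding (Proposition~\ref{lemma_embedding_Neumann_supnorm}) and the Poincaré inequality (Proposition~\ref{proposition_Poincare}) yields $J_{c}(v)\ge\tfrac14\NeuDir(v)-C$; thus $J_{c}$ is coercive and bounded below, the compact Sobolev embedding (Proposition~\ref{lemma_Sobolev}) furnishes a minimizer, and its boundedness makes it a strong solution. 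Since this works for every $c<0$, we get $\Sigma=(-\infty,0)$ and $c_-(h)=-\infty$. If instead $h\not\le0$, then $\overline h<0$ forces $h$ to change sign, so Theorem~\ref{theorem_case_c_equal_zero} supplies a solution $u_{0}$ at $c=0$; this $u_{0}$ is a subsolution for every $c<0$, and to seed the interval it then suffices to exhibit one solvable $c<0$ by constructing a supersolution above $u_{0}$ for $|c|$ small.

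I expect the main obstacle to be precisely this last construction together with the monotone iteration for unbounded $h$. Unlike the finite–graph case of \cite{GLY16}, where $h$ is automatically bounded so that the shift $K$ and a negative–constant subsolution are immediate, here $h\in\ell^{2}(X,m)$ may be unbounded in both directions, whence $v\mapsto he^{v}$ is an unbounded perturbation and neither a uniform $K$ nor a constant barrier is available. I would handle this operator–theoretically by truncating $h$, solving the resulting bounded problems by the scheme above, and passing to the limit using the a~priori bound from the supersolution and the compact Sobolev embedding (Proposition~\ref{lemma_Sobolev}); the supersolution for small $|c|$ I would obtain by perturbing $u_{0}$, i.e. by solving a linear Poisson–type correction equation through the invertibility of $\NeuLap$ on $\{1\}^{\perp}$ (Proposition~\ref{KerLN}), the finiteness of the admissible range of $|c|$ being the analytic origin of the threshold $c_-(h)$.
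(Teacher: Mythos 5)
Your part (1) is exactly the paper's computation, and your framing of (2) via the solvable set $\Sigma$, $c_-(h)=\inf\Sigma$, and the monotonicity principle (a solution at $c_1$ is an upper solution at any $c_2$ with $c_1<c_2<0$) is sound --- it even makes explicit the interval structure that the paper's proof of Theorem~\ref{t:c<0}(2) leaves implicit. Your convex unconstrained functional for $h\le 0$ is likewise a viable alternative to the paper's explicit upper solutions in that regime. The genuine gaps are in the two constructions you defer, and one of them, as proposed, would fail. Consider the upper solution for small $|c|$, which you propose to obtain ``by perturbing $u_0$'', the solution at $c=0$. Writing $u=u_0+w$, you need $w\ge 0$ with $\NeuLap w\ge g(e^{w}-1)+|c|$, where $g=he^{u_0}$. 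But $\langle g,1\rangle=\langle \NeuLap u_0,1\rangle=\NeuDir(u_0,1)=0$, so $g$ has mean zero: the linear Poisson correction $w=-\alpha\NeuLap^{-1}g$ produces at leading order only the sign-indefinite term $-\alpha g$ and \emph{no} positive mean contribution to absorb $|c|>0$, so this ansatz yields no threshold. The paper (Lemma~\ref{lemma_upper_lower_solution}(b)) instead perturbs around constants: with $h_N=h\vee(-N)$ and $N$ large enough that $\overline{h_N}<0$, it sets $u_1=\alpha\bigl(\NeuLap^{-1}(h_N-\overline{h_N})-C\bigr)+\log\alpha$; the gain $-\alpha\overline{h_N}>0$ is then first order in $\alpha$ while the error is of order $\alpha^{2}e^{2\alpha C}CN$, producing a constant $C_1>0$ and an upper solution for all $-C_1<c<0$. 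Note that the truncation enters only inside the barrier --- since $h\le h_N$, an upper solution for $h_N$ is automatically one for $h$ --- so no limit passage over truncated equations (your sketched scheme) is needed at all.

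The second gap concerns ordering and the iteration itself. Lemma~\ref{lemma_upper_lower_implies_solution} requires $u_0\le u_1$, and your only lower solution is the \emph{fixed} function $u_0$ from Theorem~\ref{theorem_case_c_equal_zero}; nothing guarantees that it lies below the upper solution above (which satisfies $u_1\le \log\alpha_0$ and is typically very negative), nor below the solution $u_{c_1}$ in your monotonicity step --- there is no comparison principle for the sign-changing nonlinearity $he^{u}$, and a negative constant is not a lower solution when $h_-$ is unbounded. The paper fills this with the one-parameter family $v_{\alpha,\beta}=-\alpha\NeuLap^{-1}(h_- -\overline{h_-})-\beta$ of Lemma~\ref{lemma_upper_lower_solution}(a), which is a lower solution whenever $0<\alpha\le |c|/\overline{h_-}$ and $\beta$ is large, and $\beta$ can be pushed up so that the lower solution sits below any prescribed upper solution; this is precisely the tool your plan is missing, both for seeding $\Sigma$ and in the monotonicity argument. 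Finally, the monotone iteration for unbounded $h$ is carried out in the paper not by truncation-and-limit but directly, via the multiplication weight $k=(1\vee(-h))e^{u_1}\in\ell^{2}(X,m)$ with $k\ge e^{\inf u_1}>0$, for which one must prove $D(\NeuLap+k)=D(\NeuLap)$ and bijectivity of $\NeuLap+k$ (Lemma~\ref{lemma_L+k}) together with the maximum principle (Lemma~\ref{MaxPrin}); this is genuine operator theory, not bookkeeping, and your proposal would still have to supply it or a full justification of the limit passage it is meant to replace.
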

While the proof of (1) follows by a direct calculation, the proof of (2) involves the construction of a solution via lower and upper solutions. In particular,
we call a function $u_0\in D(\NeuLap)$ a \emph{lower solution} if \[\NeuLap u_0\leq he^{u_0}-c,\]  and we call a function $u_1\in D(\NeuLap)$ an\emph{ upper solution} if 
\[\NeuLap u_1\geq he^{u_1}-c.\] 

We introduce for a function $ k:X\to (0,\infty) $ the bilinear form 
$ \NeuDir+k $   on $\ell^2(X,m)$ via the form sum
\begin{align*}
&D(\NeuDir+k)=D(\NeuDir)\cap \ell^{2}(X,km)\\
&(\NeuDir+k)(f,g)=\NeuDir(f,g)+\langle kf,g\rangle,\qquad f,g\in D(\NeuDir+k).
\end{align*}
An immediate consequence of Fatou's lemma (and the fact that $ D(\NeuDir)=\widetilde{D}\cap\ell^{2}(X,m) $) yields that $ \NeuDir+k $ is lower semicontiuous and, hence,  closed. We denote the associated non-negative, selfadjoint operator on $ \ell^{2}(X,m) $ by $ \NeuLap+k $ which acts as
\begin{align*}
(\NeuLap+k )f(x)=\NeuLap f(x)+ k(x)f(x),\qquad f\in D(\NeuLap+k), x\in X.
\end{align*}
We show next that for canonically compactifiable graphs over  $ (X,m) $ with $ m(X)<\infty $ the domains of the forms $  \NeuDir$ and $ \NeuDir+k $ and the corresponding operators coincide whenever $ k\in\ell^{2}(X,m) $.

 \begin{lemma}\label{lemma_L+k}
 	Let $ b $ be a  canonically compactifiable graph over $ (X,m) $, $ m(X)<\infty $ and $ k\in\ell^{2}(X,m) $ with $ k>0 $. Then,
 	\begin{align*}
 	D(\NeuLap+k)= D(\NeuLap).
 	\end{align*}
 	Moreover, the operator $L+k$ is bijective.
 \end{lemma}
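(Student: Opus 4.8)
The plan is to first show that the forms $\NeuDir$ and $\NeuDir+k$ share the same domain, then transfer this equality to the operator level, and finally establish bijectivity by combining injectivity (coming from $k>0$) with discreteness of the spectrum.

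For the form domains, recall that $D(\NeuDir+k)=D(\NeuDir)\cap\ell^{2}(X,km)$. I would argue that in fact $D(\NeuDir)\subseteq\ell^{2}(X,km)$: since the graph is canonically compactifiable, every $u\in D(\NeuDir)$ is bounded, and with $k\in\ell^{2}(X,m)$ and $m(X)<\infty$ the Cauchy-Schwarz inequality gives $\langle ku,u\rangle\leq\|u\|_\infty^{2}\langle k,1\rangle\leq\|u\|_\infty^{2}\|k\|_{2}\,m(X)^{1/2}<\infty$. Hence $D(\NeuDir+k)=D(\NeuDir)$. Moreover, bounding $\|u\|_\infty\leq C\|u\|_\NeuDir$ via Proposition~\ref{lemma_embedding_Neumann_supnorm}, the same estimate shows that the form norms $\|\cdot\|_\NeuDir$ and $\|\cdot\|_{\NeuDir+k}$ are equivalent on this common domain.

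To identify the operator domains, I would fix $u$ in the common form domain. Since $u\in\ell^{\infty}(X)$ and $k\in\ell^{2}(X,m)$, the product $ku$ lies in $\ltwo$, so $(\NeuDir+k)(u,v)=\NeuDir(u,v)+\langle ku,v\rangle$ for every $v$ in the domain. Consequently $\NeuDir(u,\cdot)=\langle f,\cdot\rangle$ for some $f\in\ltwo$ holds if and only if $(\NeuDir+k)(u,\cdot)=\langle f+ku,\cdot\rangle$; that is, $u\in D(\NeuLap)$ with $\NeuLap u=f$ exactly when $u\in D(\NeuLap+k)$ with $(\NeuLap+k)u=f+ku$. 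This yields $D(\NeuLap+k)=D(\NeuLap)$ together with the action $(\NeuLap+k)u=\NeuLap u+ku$.

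For bijectivity, injectivity follows since $u\in\ker(\NeuLap+k)$ forces $0=(\NeuDir+k)(u,u)=\NeuDir(u)+\langle ku,u\rangle$, where both summands are non-negative; as $k>0$ pointwise, $\langle ku,u\rangle=0$ gives $u=0$. For surjectivity I would invoke the compact embedding of $(D(\NeuDir),\|\cdot\|_\NeuDir)$ into $\ltwo$ from Proposition~\ref{lemma_Sobolev}: since $\|\cdot\|_\NeuDir$ and $\|\cdot\|_{\NeuDir+k}$ are equivalent, the embedding of the form domain of $\NeuDir+k$ into $\ltwo$ is compact as well, so $\NeuLap+k$ has purely discrete spectrum. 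An injective non-negative self-adjoint operator with purely discrete spectrum has a strictly positive lowest eigenvalue, so $0\notin\sigma(\NeuLap+k)$ and $\NeuLap+k$ is boundedly invertible, in particular bijective onto $\ltwo$. The step requiring the most care is the equivalence of the two form norms, since $k$ need not be bounded; this is precisely where canonical compactifiability enters, through the $\ell^{\infty}$-bound of Proposition~\ref{lemma_embedding_Neumann_supnorm}.
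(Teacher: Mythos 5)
Your proof is correct, and its central step takes a genuinely different route from the paper's. The form-domain identity and the injectivity argument coincide with the paper's (the paper estimates $\langle kf,f\rangle\leq\|k\|_{1}\|f\|_{\infty}^{2}\leq C\|k\|_{1}\|f\|_{\NeuDir}^{2}$ via H\"older's inequality and kills the kernel using $k>0$, exactly as you do). For the operator domains, however, the paper runs a resolvent computation: with $G_{0}=(\NeuLap+1)^{-1}$ and $G_{k}=((\NeuLap+k)+1)^{-1}$ it establishes the identity $G_{k}f=G_{0}kG_{k}f+G_{0}f$ --- using that $kG_{k}f\in\ltwo$ because $G_{k}\ltwo\subseteq D(\NeuDir)\subseteq\ell^{\infty}(X)$ --- and reads off $D(\NeuLap+k)\subseteq D(\NeuLap)$ from the mapping properties of $G_{0}$, the reverse inclusion being analogous; the same identity then shows that $G_{k}$ is compact (since $G_{0}$ is compact by Proposition~\ref{lemma_Sobolev}), which together with the trivial kernel gives bijectivity. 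You bypass the resolvents entirely: since every $u$ in the common form domain is bounded and $k\in\ell^{2}(X,m)$, you have $ku\in\ltwo$, so the defining relation $\NeuDir(u,v)=\langle f,v\rangle$ for all $v\in\NeuDom$ is equivalent to $(\NeuDir+k)(u,v)=\langle f+ku,v\rangle$ for all such $v$, and the representation of the associated operators yields $D(\NeuLap+k)=D(\NeuLap)$ with $(\NeuLap+k)u=\NeuLap u+ku$ in one stroke --- note that this equivalence is legitimate precisely because the two forms have the same domain, which you established first. For surjectivity you substitute the equivalence of the norms $\|\cdot\|_{\NeuDir}$ and $\|\cdot\|_{\NeuDir+k}$ together with the compact embedding of Proposition~\ref{lemma_Sobolev} to conclude that $\NeuLap+k$ has purely discrete spectrum, whence injectivity forces $0\notin\sigma(\NeuLap+k)$; this is also sound, since for an operator with compact resolvent any spectral point is an eigenvalue. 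Both proofs pivot on the same decisive consequence of canonical compactifiability --- the perturbation $u\mapsto ku$ maps the form domain into $\ltwo$ --- but your direct appeal to the form representation is shorter and more transparent, while the paper's route produces the perturbation formula $G_{k}=G_{0}kG_{k}+G_{0}$ as a by-product and reuses it verbatim to obtain compactness of the resolvent.
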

\begin{proof}
Let $ f\in \NeuDom $. 
We estimate	by using Hölder's inequality and Proposition~\ref{lemma_Sobolev}
\begin{align*}
\langle kf,f\rangle \leq \|k\|_{1}\|f\|_{\infty}^{2}\leq C \|k\|_{1}\NeuDir(f).
\end{align*}
Since $ k\in\ell^{2}(X,m) $, we infer $\|k\|_{1}<\infty$ as  $\ell^{2}(X,m)\subseteq \ell^{1}(X,m) $ in the case of finite measure. Hence, $ f\in \ell^2(X,km) $ and, therefore,
\begin{align*}
D(\NeuDir)\subseteq\ell^{2}(X,km).
\end{align*}
We conclude
$$ D(\NeuDir+k)=D(\NeuDir)\cap \ell^{2}(X,km) =D(\NeuDir).$$	

To show the equality of the operator domains let $ f,g\in\ell^{2}(X,m) $. 
Let $ G_{0}=(\NeuLap+1)^{-1} $ and $ G_{k}=((\NeuLap+k)+1)^{-1} $. Furthermore, with slight abuse of notation we  denote the operator of multiplication by $ k $ on $ \ell^{2}(X,m) $ by $ k $.
Since $$  G_{k}\ell^{2}(X,m)\subseteq D(\NeuDir+k)=D( \NeuDir)\subseteq \ell^{\infty}(X)  $$
we have
\begin{align*}
k G_{k}f\in \ell^{2}(X,m)
\end{align*}
and we calculate
\begin{align*}
\langle G_{0}k G_{k}f,g\rangle=\langle k G_{k}f,G_{0}g\rangle= (\NeuDir+k)(G_{k}f,G_{0}g)-\NeuDir(G_{k}f,G_{0}g)
\end{align*}
Using the facts that $ (\NeuDir+k)(G_{k}f,G_{0}g)+\langle  G_{k}f,G_{0}g\rangle=\langle f,G_{0}g\rangle $ and  $ \NeuDir(G_{k}f,G_{0}g)+\langle  G_{k}f,G_{0}g\rangle=\langle G_{k}f,g\rangle $ we infer
\begin{align*}
\langle G_{0}k G_{k}f,g\rangle&=\langle G_{k}f,g\rangle -\langle f,G_{0}g\rangle = \langle (G_{k}-G_{0})f,g\rangle.
\end{align*}
Since $ g $ was chosen arbitrarily in $ \ell^{2}(X,m) $ we have for all $ f\in \ell^{2}(X,m) $
\begin{align*}
G_{k}f=G_{0}k G_{k}f +G_{0}f
\end{align*}
Since $ G_{k} $ is surjective on $ D(\NeuLap+k) $ and $ G_{0}\ell^{2}(X,m)\subseteq D(\NeuLap) $ we infer the statement $  	D(\NeuLap+k)\subseteq D(\NeuLap) $.

The  other inclusion $ D(\NeuLap)\subseteq D(\NeuLap+k)  $ follows analogously.

Finally, we show that $L+k$ is bijective. Note, that $L+k$ has compact resolvent by the equality $G_{k}f=G_{0}k G_{k}f +G_{0}f$, $f\in\ell^2(X,m)$, shown above and since $G_0$ is compact by Proposition~\ref{lemma_Sobolev}. It is left to show that $\ker(L+k)=\{0\}$. Let $u\in\ker(L+k)$. Then, we have
$$0=\langle (L+k)u,u\rangle=(Q+k)(u,u)\geq \langle k u,u\rangle$$ and, since $k> 0$, we infer $u\equiv 0$.  
\end{proof}

Furthermore, we need the following maximum principle
\begin{lemma}[Maximum principle]\label{MaxPrin}
	Let $b$ be a graph over $(X,m)$ and $ k\in\ell^{2}(X,m) $, $ k\ge0 $. Let $u\in D(\NeuLap)$, $u$ not constant, such that the inequality $(\NeuLap+k) u\leq 0$ holds. Then, we get $u\leq 0$. 
\end{lemma}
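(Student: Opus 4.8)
The plan is to argue by contradiction: assume $u$ is not constant, satisfies $(\NeuLap+k)u\leq 0$, but attains a positive value somewhere. Since the conclusion we want is $u\leq 0$, I would look at where $u$ is largest. The key point is that on a canonically compactifiable graph with finite measure, $u\in D(\NeuLap)\subseteq\ell^\infty(X)$ is bounded, so $M:=\sup_{x\in X}u(x)<\infty$ is well defined and finite. The heart of the matter is to understand the pointwise action of $\NeuLap$ at (or near) a point where $u$ approaches its supremum.

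**The main steps.** First I would write out the hypothesis pointwise: using the explicit form of the operator established earlier, for every $x\in X$,
\begin{align*}
(\NeuLap+k)u(x)=\frac{1}{m(x)}\sum_{y\in X}b(x,y)\big(u(x)-u(y)\big)+k(x)u(x)\leq 0.
\end{align*}
Suppose for contradiction that $M>0$. The clean case is when the supremum is attained at some $x_0\in X$ with $u(x_0)=M>0$. Then each term $b(x_0,y)(u(x_0)-u(y))\geq 0$, so the sum is nonnegative, and $k(x_0)u(x_0)\geq 0$ since $k\geq 0$ and $u(x_0)>0$; hence the left-hand side is $\geq 0$, forcing it to equal $0$. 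This in turn forces $b(x_0,y)(u(x_0)-u(y))=0$ for all $y$, i.e.\ $u(y)=M=u(x_0)$ for every neighbour $y$ of $x_0$, and also forces $k(x_0)u(x_0)=0$. By connectedness of the graph I would then propagate this equality along paths to conclude $u\equiv M$ on all of $X$, contradicting the assumption that $u$ is not constant.

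**The main obstacle.** The genuine difficulty is that on an infinite graph the supremum $M$ need not be attained at any vertex. I expect this to be the crux of the argument and where care is required. To handle it I would take a sequence $(x_n)$ with $u(x_n)\to M$ and try to extract quantitative decay from the inequality, or alternatively invoke that $u\in\ell^2(X,m)$ together with finiteness of $m$ to control the behaviour at infinity. A cleaner route may be to test the inequality against a suitable truncation: apply $(\NeuLap+k)u\leq 0$ paired with $u_+:=0\vee u\in\NeuDom$ (which lies in the form domain by the Markov property and Lemma~\ref{lemma_diff_in_dom}), giving
\begin{align*}
0\geq\langle(\NeuLap+k)u,u_+\rangle=(\NeuDir+k)(u,u_+)\geq\NeuDir(u,u_+)+\langle ku_+,u_+\rangle.
\end{align*}
Since $\NeuDir(u,u_+)\geq\NeuDir(u_+,u_+)=\NeuDir(u_+)\geq 0$ (the cross terms where $u<0$ only help), and $\langle ku_+,u_+\rangle\geq 0$ with $k>0$ wherever $u_+>0$, both nonnegative quantities must vanish; $\langle ku_+,u_+\rangle=0$ with $k>0$ then forces $u_+\equiv 0$, i.e.\ $u\leq 0$. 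This form-theoretic argument sidesteps the attainment problem entirely and is the approach I would ultimately commit to, using the pointwise maximum-principle reasoning above only as intuition.
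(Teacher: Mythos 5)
Your committed form-theoretic argument is essentially the paper's own proof: it pairs $(\NeuLap+k)u\leq 0$ against $u_+\in D(\NeuDir+k)$, uses the sign of the cross term $\NeuDir(u_+,u_-)\leq 0$ (the paper phrases this via the Dirichlet-form contraction $(\NeuDir+k)(|u|)\leq(\NeuDir+k)(u)$) to isolate the nonnegative quantities $\NeuDir(u_+)$ and $\langle ku_+,u_+\rangle$, and concludes $u_+\equiv 0$ from the strict positivity of $k$, exactly as the paper does. Your opening pointwise maximum-point discussion is only motivation and is correctly discarded, since on an infinite graph the supremum need not be attained.
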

\begin{proof}
	By  $u_{\pm}=(\pm u)\vee 0\in D(\NeuDir+k)$ one has \[0\geq \langle (\NeuLap +k)u, u_+\rangle=(\NeuDir+k)(u,u_+)=(\NeuDir+k)(u_+,u_+)-(\NeuDir+k)(u_+,u_-).\] Hence,
	\[(\NeuDir+k)(u_+,u_+)\leq(\NeuDir+k)(u_+,u_-)\leq 0,\] where the latter inequality holds by $(\NeuDir+k)(|u|)\leq (\NeuDir+k)(u)$ as $k>0$ and $Q$ is a Dirichlet form. Thus $u_+\equiv 0$ since $k>0$ by assumption. 
\end{proof}

With these preparations we  show that the existence of suitable lower and upper solutions imply the existence of a solution.

\begin{lemma}\label{lemma_upper_lower_implies_solution}
	Let $b$ be a connected, canonically compactifiable graph over $(X,m)$, $ m(X)<\infty $ and let $c<0$, $0\not=h\in \ell^2(X)$.
	If there is an upper solution $u_1$ and a lower solution $u_0$ such that $u_1\geq u_0$, then there is a solution $u\in D(\NeuLap)$ with $u_1\geq u\geq u_0$ and $\NeuLap u=he^u-c$.
\end{lemma}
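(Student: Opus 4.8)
The plan is to construct the solution by a monotone iteration, the discrete analogue of the method of sub- and supersolutions, organised around the shifted operators $\NeuLap+k$ from Lemma~\ref{lemma_L+k}. Since the graph is canonically compactifiable, both $u_0$ and $u_1$ lie in $\ell^\infty(X)$; I set $M:=\max\{\|u_0\|_\infty,\|u_1\|_\infty\}$ and define $k:=|h|e^{M}+1$. Then $k\in\ell^2(X,m)$, because $|h|e^{M}\in\ell^2(X,m)$ and $1\in\ell^2(X,m)$ as $m(X)<\infty$, and $k>0$. The point of this choice is that for every $x$ the real function $s\mapsto k(x)s+h(x)e^{s}$ has derivative $k(x)+h(x)e^{s}\geq 1>0$ on $[-M,M]$ and is thus strictly increasing there; this is exactly what turns the (sign-indefinite) nonlinearity into a monotone map on the order interval $[u_0,u_1]$.

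By Lemma~\ref{lemma_L+k} the operator $\NeuLap+k$ is bijective with compact resolvent, and I first note that $(\NeuLap+k)^{-1}$ is positivity preserving: this follows from the maximum principle, Lemma~\ref{MaxPrin}, since $(\NeuLap+k)w\geq 0$ applied to $-w$ forces $w\geq 0$ (the constant case being trivial because $k>0$). I then introduce the solution map $Sv:=(\NeuLap+k)^{-1}\big(kv+he^{v}-c\big)$, whose fixed points are precisely the solutions of $\NeuLap u=he^{u}-c$. For $v\in\ell^\infty(X)$ with $u_0\leq v\leq u_1$ the argument $kv+he^{v}-c$ lies in $\ell^2(X,m)$, being dominated by $kM+|h|e^{M}+|c|\in\ell^2(X,m)$, so $Sv\in D(\NeuLap+k)=D(\NeuLap)$ is well defined; the monotonicity of $s\mapsto ks+he^{s}$ on $[-M,M]$ together with the positivity of $(\NeuLap+k)^{-1}$ shows that $u_0\leq v\leq w\leq u_1$ implies $Sv\leq Sw$. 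Moreover, adding $ku_0$ resp.\ $ku_1$ to the defining inequalities for the lower and upper solutions and applying the order-preserving operator $(\NeuLap+k)^{-1}$ rewrites them as $u_0\leq Su_0$ and $Su_1\leq u_1$.

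Starting from $u^{(0)}:=u_1$ and iterating $u^{(n+1)}:=Su^{(n)}$, these facts yield by induction a pointwise decreasing sequence $u_1=u^{(0)}\geq u^{(1)}\geq\cdots$ that stays above $u_0$, that is $u_0\leq u^{(n+1)}\leq u^{(n)}\leq u_1$ for all $n$. Consequently $u^{(n)}$ converges pointwise to some $u$ with $u_0\leq u\leq u_1$, and since the whole sequence is dominated by $\max\{|u_0|,|u_1|\}\in\ell^\infty(X)\subseteq\ell^2(X,m)$, Lebesgue's dominated convergence theorem upgrades this to convergence in $\ell^2(X,m)$.

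The remaining step, passing to the limit in $(\NeuLap+k)u^{(n+1)}=ku^{(n)}+he^{u^{(n)}}-c$, is the one that must be handled with care, precisely because no elliptic regularity is available in the discrete setting; this is the main obstacle. The uniform bound $\|u^{(n)}\|_\infty\leq M$ rescues the argument: the right-hand sides are dominated in $\ell^2(X,m)$ by $kM+|h|e^{M}+|c|$ and converge pointwise, hence converge in $\ell^2(X,m)$ to $ku+he^{u}-c$ by dominated convergence. Applying the bounded operator $(\NeuLap+k)^{-1}$ gives $u^{(n+1)}\to(\NeuLap+k)^{-1}(ku+he^{u}-c)$ in $\ell^2(X,m)$, and comparing with the already established $\ell^2$-limit $u$ yields $u=(\NeuLap+k)^{-1}(ku+he^{u}-c)$. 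In particular $ku+he^{u}-c\in\ell^2(X,m)$, so $u\in D(\NeuLap+k)=D(\NeuLap)$ and $(\NeuLap+k)u=ku+he^{u}-c$, that is $\NeuLap u=he^{u}-c$ with $u_0\leq u\leq u_1$, as required. Everything hinges on the $\ell^\infty$-bound coming from the order interval $[u_0,u_1]$, which produces the $\ell^2$-domination, and on the boundedness of $(\NeuLap+k)^{-1}$ supplied by Lemma~\ref{lemma_L+k}.
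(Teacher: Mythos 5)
Your proposal is correct and follows essentially the same route as the paper: a monotone iteration $u^{(n+1)}=(\NeuLap+k)^{-1}\bigl(ku^{(n)}+he^{u^{(n)}}-c\bigr)$ starting from the upper solution, with comparison supplied by the maximum principle (Lemma~\ref{MaxPrin}) and the limit handled by dominated convergence. The only deviations are cosmetic: you take $k=|h|e^{M}+1$ instead of the paper's $k=(1\vee(-h))e^{u_1}$ (both make $s\mapsto k(x)s+h(x)e^{s}$ monotone on the relevant range, which the paper encodes via the mean value theorem), and you close the argument with the boundedness of $(\NeuLap+k)^{-1}$ rather than the closedness of $\NeuLap$.
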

\begin{proof}
Taking the upper solution $ u_{1}\in D(\NeuLap) $ and $ h\in\ell^{2}(X) $, we let
	\begin{align*}
	k=(1\vee(-h))e^{u_1}.
	\end{align*}
Since the graph is canonically compactifiable, and thus $ D(\NeuLap)\subseteq \ell^{\infty}(X) $, we have $ e^{u_{1}}\in \ell^{\infty}(X) $ and, therefore, 
\begin{align*}
k\geq e^{\inf u_1}>0\qquad \mbox{and}\qquad k\in\ell^{2}(X,m).
\end{align*}
Hence, the operator  $\NeuLap+k$ is bijective by Lemma~\ref{lemma_L+k}. 
For $ u\in C(X) $ and $ c $ from our assumption we let the right hand side of the Kazdhan-Warner equation be denoted by
$$ f_u:X\to\IR,\qquad f_u=he^{u}-c .$$
If $ u\in \ell^{\infty}(X,m) $, in particular if $ u\in D(\NeuDir+k)=D(\NeuDir) $, we have $f_u\in\ltwo$. 
Starting with $ u_{1} $ we inductively define
\[u_{j+1}=(\NeuLap+k)^{-1}(f_{u_j}+ku_j)\in\ltwo,\qquad j\geq 1.\]

 We next show $u_0\leq \ldots\leq u_{j+1}\leq u_j\leq \ldots\leq u_1$ by induction. For $ j=2 $  we calculate 
 \[(\NeuLap+k) (u_2-u_1)= f_{u_1}+ku_1-\NeuLap u_1-ku_1= f_{u_1}-\NeuLap u_1\leq 0\] as $u_1$ is an upper solution.
	Using the maximum principle, Lemma~\ref{MaxPrin}, we infer $u_2-u_1\leq 0$. 	For $j\geq 2$  assume $u_j\leq u_{j-1}\leq\ldots\leq u_1$. We infer
	\begin{align*}
	(\NeuLap+k) (u_{j+1}-u_j)&=f_{u_j}-f_{u_{j-1}}+ku_j-ku_{j-1}\\
	&=h\cdot(e^{u_j}-e^{u_{j-1}})+k\cdot(u_j-u_{j-1})\\
	&=(he^{\xi}+k)\cdot(u_j-u_{j-1})\\
	&\leq 0,
	\end{align*}
	where the function $\xi:X\to \IR$ with  $u_{j-1}\geq \xi\geq u_j$ is given by the mean value theorem.
	Here, the last inequality follows from the definition of $k$. Note that $u_1\geq \xi$ by the induction hypothesis.
	Thus, the maximum principle, Lemma~\ref{MaxPrin}, yields $u_{j+1}\geq u_j$. 
	
	Next we show $u_j\geq u_0$ for every $j\geq1$.
	The case $j=1$ follows by the assumption  $u_1\geq u_0$.
	Suppose  $u_j\geq u_0$ for some $j\geq1$. We calculate
	\begin{align*}(\NeuLap+k) (u_{j+1}-u_0)&=f_{u_{j}}+ku_{j}-\NeuLap u_0-ku_0\\&\geq h\cdot(e^{u_{j}}-e^{u_0})+k\cdot(u_{j}-u_0)\\&=(he^{\xi}+k)\cdot(u_j-u_{0})\\&\geq 0, \end{align*} 
	where the function $\xi:X\to \IR$ with  $u_{j}\geq \xi\geq u_0$ is given by the mean value theorem.
	Thus, using the maximimum principle we infer $u_0\leq u_j$. 
	
	Hence, the sequence $u_j$ is pointwise monotonically decreasing  and, thus, there is a pointwise limit $u:X\to\IR$ with $u_0\leq u\leq u_1$. Using the finiteness of $m$ and Lebesgue's dominated
	convergence theorem, we infer $u\in\ltwo$ and $\|u_j-u\|_2\to 0$, $ j\to\infty $. 
	On the other hand we have $ u_{j}\in D(\NeuLap) $ by $ D(\NeuLap+k) =D(\NeuLap)$, Lemma~\ref{lemma_L+k}. Hence, $ (\NeuLap+k) u_j=f_{u_{j-1}}+ku_{j-1} $ yields

 \[\NeuLap u_j=f_{u_{j-1}}+ku_{j-1}-k u_j\to f_u\] in $\ltwo$ by Lebesgue's  dominated
 convergence theorem.
	The closedness of the operator $\NeuLap$ yields $u\in D(\NeuLap)$. By definition of $ f_{u}=he^{u}-c $ we conclude the equality
	$$ \NeuLap u=he^{u}-c. $$ 
\end{proof}

By the lemma above it suffices to present a lower and an upper solution $ u_{0} $ and $ u_{1} $ with $ u_{0}\leq u_{1} $ to prove Theorem~\ref{t:c<0}~(2). The next lemma shows the existence of a lower solutions in general and upper solutions for  $ c <0$ sufficiently large.

\begin{lemma}\label{lemma_upper_lower_solution}
	Let $b$ be a connected, canonically compactifiable graph over $(X,m)$, $ m(X)<\infty $ and let $h\in \ell^2(X,m)$ be such that $ \overline h <0 $.
	\begin{itemize}
		\item [(a)] For all $c<0$, there is a lower solution  $u_{0}\in  D(\NeuLap)$.
		\item [(b)]  There is $ c_{-}(h) <0$ such that there exists  an upper solution in $u_{1}\in D(\NeuLap)$ for $ c_{-}(h)<c<0 $. If furthermore $ h\leq 0 $, then $ c_{-}(h) =-\infty$.
	\end{itemize}
	Furthermore, $ u_{0} $ and $ u_{1} $ can be chosen such that $ u_{0}\leq u_{1} $.
	\end{lemma}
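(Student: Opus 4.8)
The plan is to write $h=h_+-h_-$ with $h_+=h\vee 0$ and $h_-=(-h)\vee 0$ (both in $\ell^2(X,m)$) and to build every barrier from linear solutions that exist by Proposition~\ref{KerLN}. Since $\overline h<0$ we have $\overline{h_-}>\overline{h_+}\ge 0$, so $\overline{h_-}>0$ and $h_-\not\equiv 0$. As $\overline{h_-}-h_-\in\ltwo$ has zero average it lies in $\{1\}^\perp$, so there is a unique $\theta\in\{1\}^\perp\cap D(\NeuLap)$ with $\NeuLap\theta=\overline{h_-}-h_-$, and $\theta\in\ell^\infty(X)$ because the graph is canonically compactifiable. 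For part~(a) I would set $s=|c|/\overline{h_-}>0$ and $u_0=s\theta+a$, choosing $a\le\log s-s\|\theta\|_\infty$ so that $e^{u_0}\le s$ everywhere. Then $\NeuLap u_0=s\overline{h_-}-sh_-$, and with $h=h_+-h_-$,
\[\NeuLap u_0-he^{u_0}=s\overline{h_-}+h_-(e^{u_0}-s)-h_+e^{u_0}\le s\overline{h_-}=|c|,\]
since $e^{u_0}\le s$ makes the middle term $\le 0$ and the last term is $\le 0$. Hence $u_0$ is a lower solution, for \emph{every} $c<0$.

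For part~(b) the case $h\le 0$ (so $h_-=|h|$, $h_+\equiv 0$) is the exact mirror of this, using the same $\theta$: put $u_1=s\theta+a'$ with $s=|c|/\overline{h_-}$ and $a'\ge\log s+s\|\theta\|_\infty$ so that $e^{u_1}\ge s$ everywhere. The term $-h_+e^{u_1}$ is now absent and $e^{u_1}\ge s$ forces the middle term to be $\ge 0$, so
\[\NeuLap u_1-he^{u_1}=s\overline{h_-}+h_-(e^{u_1}-s)\ge s\overline{h_-}=|c|,\]
i.e.\ $u_1$ is an upper solution for every $c<0$ and $c_-(h)=-\infty$. As $u_0$ and $u_1$ are both of the form $s\theta+\text{const}$ with $a\le a'$, we automatically get $u_0\le u_1$, which closes this case via Lemma~\ref{lemma_upper_lower_implies_solution}.

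The case of general $h$ (positive somewhere) is \textbf{the main obstacle}: the term $-h_+e^{u_1}$ cannot be removed, and on $\{h>0\}$ the supersolution inequality forces $\NeuLap u_1$ to dominate $h_+e^{u_1}+|c|$, a quantity that \emph{increases} with $u_1$, so one cannot simply push $u_1$ up. My plan is to replace $\theta$ by the solution $v\in\{1\}^\perp\cap D(\NeuLap)$ of $\NeuLap v=h-\overline h$ (again Proposition~\ref{KerLN}); then $u_1=sv+a$ gives $\NeuLap u_1=s(h-\overline h)$ and
\[\NeuLap u_1-he^{u_1}=h(s-e^{u_1})+s|\overline h|.\]
Because $\NeuLap v=h-\overline h>0$ on $\{h>0\}$ makes $v$ comparatively small there and comparatively large where $h$ is very negative, the constant $a$ can be positioned so that $e^{u_1}\le s$ on $\{h>0\}$ and $e^{u_1}\ge s$ on the bulk of $\{h<0\}$, forcing $h(s-e^{u_1})\ge 0$ and hence $\NeuLap u_1-he^{u_1}\ge s|\overline h|$. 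The genuinely delicate region is $\{-\delta\le h<0\}$, where the sign of $s-e^{u_1}$ is uncontrolled and the surplus $s|\overline h|$ must absorb the small negative contribution $h(s-e^{u_1})$; this works precisely when $|c|=s|\overline h|$ is small, which is exactly what pins down the threshold.

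Accordingly I would \emph{define} $c_-(h)$ as the infimum of those $c\in(-\infty,0)$ admitting a bounded upper solution, prove $c_-(h)<0$ by the construction above for small $|c|$, and derive the half-line statement from the monotonicity that an upper solution for $c$ is again an upper solution for every $c'$ with $c<c'<0$ (since $-c'<-c$), while part~(a) already supplies a lower solution for all $c<0$. Feeding an ordered pair into Lemma~\ref{lemma_upper_lower_implies_solution} then yields solutions throughout $(c_-(h),0)$, and a solution, being its own upper solution, cannot exist for $c<c_-(h)$. Finally, the ordering $u_0\le u_1$ is arranged by decreasing $a$ in the lower solution, which only relaxes its defining inequality; throughout, boundedness of all barriers ($D(\NeuLap)\subseteq\ell^\infty(X)$) is what keeps $he^{u_i}\in\ltwo$ and lets me avoid any appeal to a chain rule.
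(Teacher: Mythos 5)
Your part (a), the case $h\le 0$ of part (b), and the ordering argument are sound and essentially coincide with the paper's proof: the paper likewise scales the solution of the linear problem $\NeuLap\theta=\overline{h_-}-h_-$ (there written $v_{\alpha,\beta}=-\alpha\NeuLap^{-1}(h_--\overline{h_-})-\beta$) and shifts by constants, and your bookkeeping of $a,a'$ with $e^{u_0}\le s\le e^{u_1}$ is, if anything, cleaner. Your framing of $c_-(h)$ as an infimum over those $c$ admitting an upper solution, together with the monotonicity remark that an upper solution for $c$ remains one for $c<c'<0$, also correctly reproduces the half-line structure claimed in Theorem~\ref{t:c<0}~(2).

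The genuine gap is exactly where you flagged ``the main obstacle'': the upper solution for general $h$. The heuristic you invoke is backwards. By the discrete maximum principle, $v=\NeuLap^{-1}(h-\overline h)$ attains its maximum at a point where $\NeuLap v\ge 0$, i.e.\ where $h\ge\overline h$, so $v$ is \emph{large}, not small, where $h$ is positive. Concretely, on the two-point graph $X=\{x,y\}$ with $b(x,y)=1$, $m\equiv 1$, $h(x)=1$, $h(y)=-3$, one computes $v(x)-v(y)=2>0$, so every $u_1=sv+a$ with $s>0$ is strictly larger on $\{h>0\}$, and your desired configuration $e^{u_1}\le s$ on $\{h>0\}$ and $e^{u_1}\ge s$ on $\{h<0\}$ is impossible; in fact no upper solution can be sign-matched in your sense, since at any point with $h(x)>0$ it must satisfy $\NeuLap u_1(x)\ge h(x)e^{u_1(x)}+|c|>0$. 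The mechanism that actually works is not pointwise sign matching but a small-parameter estimate: with the natural choice $u_1=s(v-\|v\|_\infty)+\log s$ one has $\NeuLap u_1-he^{u_1}=sh\,(1-e^{s(v-\|v\|_\infty)})+s|\overline h|$, and the mean value theorem bounds the first term below by $-2s^2\|v\|_\infty\, h_-$ pointwise. But then a second problem appears which your sketch does not address: $h$ is only assumed to be in $\ell^2(X,m)$, so $h_-$ is in general unbounded, and no smallness of $s$ (equivalently of $|c|$) makes this expression nonnegative at \emph{all} points. This is precisely why the paper first truncates, setting $h_N=h\vee(-N)$ with $N$ so large that $\overline{h_N}<0$ still holds: since $h\le h_N$, an upper solution for the truncated equation is one for the original, and the error term becomes uniformly bounded below by $-2\alpha^2 e^{2\alpha C}CN$, which is dominated by the surplus $\alpha|\overline{h_N}|$ for $\alpha$ small; this is what pins down a finite negative threshold $c_-(h)$. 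Without the truncation and this mean-value/smallness argument, your construction of $u_1$ for general $h$ does not go through, so part (b) remains unproven in your proposal.
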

\begin{proof}
	Recall that the operator $ \NeuLap $ is bijective on the $ \ell^{2} $ functions which are orthogonal to the constants, Proposition~\ref{KerLN}. We  denote this inverse with slight abuse of notation by $ {\NeuLap}^{-1} $. Then $\NeuLap^{-1}$  maps the orthogonal complement of the constants into $ D(\NeuLap)\subseteq D(\NeuDir)\subseteq\ell^{\infty}(X)$ and we
	 define
	\begin{align*}
	v_{\alpha,\beta}=-\alpha\NeuLap^{-1}(h_{-}-\overline h_{-}) - \beta
	\end{align*}
for $\alpha,\beta\in\IR$ where $ f_{\pm} $ denotes the positive and negative part of a function  $  f$, i.e., 
$ f_{\pm}=(\pm f)\vee 0.$  Hence, $ v_{\alpha,\beta}\in D(\NeuLap)\subseteq \ell^{\infty} (X)$ and $ v_{\alpha,\beta}\to-\infty  $ uniformly for $\beta\to\infty$  and $\alpha\in\R $ fixed. We estimate $ -h \leq h_{-}$ to get
	\begin{align*}
	\NeuLap v_{\alpha,\beta}-he ^{v_{\alpha,\beta}}+c&\leq-\alpha h_{-}+\alpha\overline h_{-}+h_{-}e^{v_{\alpha,\beta}}+c\\&=(e^{v_{\alpha,\beta}}-\alpha)h_{-}+\alpha\overline h_{-}+c.
	\end{align*}
	Choosing $0\leq \alpha_{0}\leq |c|/\overline h_{-} $ and $ \beta _{0}$ sufficiently large we obtain the statement (a) with $ u_{0}=v_{\al_{0},\beta_{0}} $.
	
To show (b) let $ N $  be such that $ h_{N}=h\vee(- N )$ still satisfies $ \overline h_{N}<0 $. Furthermore,
 let $ C:=\|\NeuLap^{-1}(h_{N}-\overline h_{N}) \|_{\infty}$  and define for
$ \alpha\ge0 $
	\begin{align*}
	v_{\alpha}=\alpha(\NeuLap^{-1}(h_{N}-\overline h_{N})- C ).
	\end{align*}
	We observe that $-2\alpha C\le v_{\alpha} \le 0$. 
	We calculate 
	\begin{align*}
	\NeuLap (v_{\alpha}+\log \alpha)-he ^{v_{\alpha}+\log\al}&\geq
	\NeuLap (v_{\alpha}+\log \alpha)-h_{N}e ^{v_{\alpha}+\log\al}\\
	&=(\alpha- e^{v_{\alpha}+\log \alpha})h_{N}-\alpha\overline h_{N}\\
	&=\alpha(1- e^{v_{\alpha}})h_{N}-\alpha\overline h_{N}.	
	\end{align*}
	Next, we apply  the mean value theorem and infer that there is a function $ \xi:X\to [-2\alpha C,0 ] $ with $1-e^{v_\alpha}=-e^\xi v_\alpha$, use that $0\geq v_{\alpha}\geq -2\alpha C $ and denote by $ h_{N,-} $ the negative part of $ h_{N} $ to deduce
\begin{align*}
\ldots&=-\alpha e^{\xi} v_{\alpha} h_{N}-\alpha\overline h_{N}\\&\geq \alpha e^{\xi} v_{\alpha} h_{N,-}-\alpha\overline {h_N}\\
		&\geq -2\alpha^{2}e^{2\alpha C}CN-\alpha \overline{h_N}.
		\end{align*}
		Since $ \overline{h}_{N} <0$ there is $ \alpha_{0} $ such that
		\begin{align*}
		 C_{1}:=-2\alpha_{0}^{2}e^{2\alpha_{0} C}CN-\alpha_{0} \overline{h}_{N}>0
		\end{align*} 
		Hence, for $0> c_{-}(h)> -C_{1} $ we infer that $ u_{1}=v_{\al_{0}}+\log \al_{0} $ is an upper solution for $ 0>c>c_{-}(h) $. This is the first statement of (b). 
		
		Now, assume $ h\leq0 $.  For
$ \alpha\ge0$ define
	\begin{align*}
	v_{\alpha}=\alpha(\NeuLap^{-1}(h-\overline h)-C )
	\end{align*}
	with $C:=\|\NeuLap^{-1}(h-\overline h) \|_{\infty}$. Then, $ v_\al\leq 0$ and we deduce			
	\begin{align*}
			\NeuLap( v_{\alpha}+\log\al)-he ^{v_{\alpha}+\log\al}=\al (1-e^{v_{\al}})h-\al\overline h\geq-\al \overline{h}
			\end{align*}
			since  $ \alpha >0 $, $ v_{\al}\leq0 $, $ h\leq 0 $.			So, for given $ c<0 $  let $ \al_{c} \ge 0$ be such that $ -\al_{c}\overline{h}\geq -c $. Then, $  u_{1}=v_{\al_{c}}+\log\al_{c}$ is an upper solution.

			To see that we can choose $ u_{0} \leq u_{	1}$ we observe that $ u_{0},u_{1}\in D(\NeuLap)\subseteq \ell^{\infty}(X) $ and $ \beta_{0} $ in the definition of $ u_{0} $ can be chosen arbitrary large.
\end{proof}

\begin{proof}[Proof of Theorem~\ref{t:c<0}]
	$(1)$: Let $u$ be a solution. Then we have 
	\begin{align*}
	\langle h,1\rangle=
	\langle
	 he^{u},e^{-u}\rangle&=	\langle\NeuLap u,e^{-u}\rangle+c\langle 1,e^{-u}\rangle=\NeuDir(u,e^{-u})+c\langle 1,e^{-u}\rangle<0,
	\end{align*}
	where the last inequality follows since $ t\mapsto e^{-t} $ is monotone decreasing and the definition of $ \NeuDir $.
	
	$(2)$: This follows from  Lemma~\ref{lemma_upper_lower_implies_solution} and Lemma~\ref{lemma_upper_lower_solution}.
\end{proof}

\bibliography{literature}{}
\bibliographystyle{alpha}
\end{document}